\documentclass[12pt,reqno]{amsart}

\usepackage{amssymb}
\usepackage{amsthm}

\usepackage{latexsym,amsmath,amssymb}
\usepackage{amsfonts}
\usepackage{mathrsfs}
\usepackage{bm}
\usepackage{enumitem}
\usepackage[colorlinks=true, bookmarks=true,
bookmarksnumbered=true, bookmarkstype=toc, linkcolor=blue,
urlcolor=blue, citecolor=blue]{hyperref}

\theoremstyle{plain}
\newtheorem{thm}{Theorem}[section]
\newtheorem{lem}[thm]{Lemma}
\newtheorem{cor}[thm]{Corollary}
\newtheorem{prop}[thm]{Proposition}

\theoremstyle{definition}
\newtheorem{dfn}[thm]{Definition}
\newtheorem{rem}[thm]{Remark}

\makeatletter

\@addtoreset{equation}{section}
\makeatother

\newcommand{\R}{\mathbb{R}}
\renewcommand{\phi}{\varphi}

\begin{document}

\title[ABP estimates for weighted $1$-Laplacian]{
On the Aleksandrov--Bakelman--Pucci estimates for the weighted $1$-Laplacian
}

\author[S. Kitano]{Shuhei Kitano}
\address[S. Kitano]{
Waseda Research Institute for Science and Engineering\\
Waseda University, 3-4-1 Okubo, Shinjuku-ku\\
Tokyo 169-8555, Japan
}
\email{sk.koryo@moegi.waseda.jp}

\subjclass[2020]{35B50; 35B65; 35J60; 35J75; 35D40}

\keywords{
Aleksandrov--Bakelman--Pucci estimate, weighted $1$-Laplacian, fully nonlinear elliptic equations, viscosity solutions, quasiconcave envelope
}

\thanks{
The author was supported in part by JSPS KAKENHI Grant-in-Aid for Early-Career Scientists (JP24K16956) and Grant-in-Aid for JSPS Fellows (JP25KJ0086).
}

\begin{abstract}
We establish Aleksandrov--Bakelman--Pucci-type estimates for viscosity subsolutions of Poisson equations associated with the weighted $1$-Laplacian and, more generally, with fully nonlinear operators acting on the Hessian in directions orthogonal to the gradient. A key feature of the proof is that the quasiconcave envelope plays the role of the concave envelope in the classical ABP estimate.
\end{abstract}

\maketitle

\section{Introduction}
Let $n\geq2$ and let $\Omega\subseteq\R^n$ be a bounded domain.
In this paper, we study Aleksandrov--Bakelman--Pucci (ABP) estimates for the Poisson problem associated with the weighted $1$-Laplacian
\[
 |\nabla u|^{1/(n-1)}\Delta_1u=f(x)
 \quad\text{in }\Omega,
\]
where
\[
 \Delta_1u
 =\operatorname{div}\frac{\nabla u}{|\nabla u|}
 =\frac{1}{|\nabla u|}\operatorname{tr}\widetilde D^2u.
\]
More generally, we consider the fully nonlinear equation
\begin{equation}\label{F}
 |\nabla u|^{(2-n)/(n-1)}
 F(x,\widetilde D^2u)=f(x).
\end{equation}
Here $\widetilde D^2u$ denotes the Hessian of $u$ projected onto directions orthogonal to $\nabla u$. We assume that $F$ is uniformly elliptic with ellipticity constants $0<\lambda\leq\Lambda$. Precise definitions and notation are given in Section~\ref{s2}.

The ABP estimate originates in the classical work of Aleksandrov
\cite{alek1961,alek1963},
Bakelman~\cite{bak1961}, and Pucci~\cite{puc1966}, and is one of the fundamental tools in the regularity theory of elliptic equations in nondivergence form.
A basic version of the estimate states that if $u$ is a subsolution of
\[
 \operatorname{tr}(A(x)D^2u)=f(x)
 \quad\text{in }\Omega
\]
and $u\leq0$ on $\partial\Omega$, where $A(x)$ is uniformly elliptic, then
\[
 \sup_\Omega u
 \leq
 C\operatorname{diam}(\Omega)
 \|f^-\|_{L^n(\{\widetilde\Gamma=u^+\})},
\]
where $\widetilde\Gamma$ denotes the concave envelope of $u^+$ after its zero extension to a suitable ball containing $\Omega$.
Corresponding ABP estimates hold for fully nonlinear uniformly elliptic equations; see~\cite{CafCab}.
It also plays a fundamental role in the Krylov--Safonov theory and in the derivation of the Harnack inequality; see the foundational works~\cite{KS79,Saf80}.

ABP-type estimates have also been established for degenerate and singular equations. Important examples include the Poisson equation for the $p$-Laplacian,
\begin{equation}\label{plap}
 \Delta_pu
 =
 \operatorname{div}(|\nabla u|^{p-2}\nabla u)
 =f,
\end{equation}
for $p\in(1,\infty)$, and equations of the form
\begin{equation}\label{ds}
 |\nabla u|^\alpha F(D^2u)=f,
\end{equation}
for $\alpha>-1$. For the $p$-Laplacian, see~\cite{ACP,Miotto10,WW13}, and for singular or degenerate fully nonlinear equations, see~\cite{DFQ09,I11,Miotto10}.

Another extreme example is the Poisson equation for the normalized infinity Laplacian,
\begin{equation}\label{ilap}
 \Delta_\infty^N u
 =
 |\nabla u|^{-2}
 \langle D^2u\nabla u,\nabla u\rangle
 =f.
\end{equation}
For the normalized infinity Laplacian \eqref{ilap}, the classical ABP mechanism is substantially more delicate because of the strong degeneracy of the operator. Charro, De Philippis, Di Castro, and M\'aximo~\cite{CDDM} obtained $L^\infty$ bounds in terms of $\|f\|_{L^\infty}$ and proved that the classical ABP estimate fails in this setting. They also raised the question of whether an $L^\infty$ estimate can be obtained in terms only of $\|f\|_{L^p}$ for some $p>n$. We are not aware of a result resolving this question.

It is therefore natural to ask whether an analogous estimate can be obtained for equations of $1$-Laplace type. The variational theory of the $1$-Laplacian is naturally formulated in $BV$, using total variation and the pairing between $BV$ functions and bounded divergence-measure fields; see, for example, \cite{Anz83,ACM04,Dem04,KawSch07}. To the best of our knowledge, no ABP-type estimate has previously been established for the $1$-Laplacian.
For the unweighted equation
\[
 \Delta_1u=f,
\]
no $L^\infty$ estimate depending only on $f$ and $\Omega$ can hold, since $\lambda u$ is again a solution for every $\lambda>0$ whenever $u$ is a solution.
This motivates the weighted $1$-Laplacian considered above.

Our main result is the following ABP-type estimate.

\begin{thm}\label{thm1}
Let $\Omega\subseteq\R^n$ be a bounded domain, and let
$F\in C(\overline\Omega\times\mathcal S^n)$ be uniformly elliptic with ellipticity constants
$0<\lambda\leq\Lambda$ and satisfy $F(x,0)=0$ for every $x\in\overline\Omega$.
Suppose that $u\in\operatorname{USC}(\overline\Omega)$ is a viscosity subsolution of \eqref{F} in $\Omega$ for some $f\in C(\overline\Omega)$ and that
$u\leq0$ on $\partial\Omega$.
Then
\begin{equation}\label{ABP}
 \sup_\Omega u
 \leq
 \frac{1}{
 \lambda^{n-1}(n-1)^{n-1}
 \mathcal H^{n-1}(\partial B_1)}
 \int_{\Theta(u^+,\Omega)}
 (f^-)^{n-1}\,dx.
\end{equation}
Here
\[
 \Theta(u^+,\Omega)
 =
 \left\{
 x\in\Omega:
 \begin{array}{l}
 \text{there exists }\nu\in\partial B_1\text{ such that}\\[1mm]
 u^+(y)\leq u^+(x)
 \text{ for every }y\in\Omega
 \text{ with }(y-x)\cdot\nu>0
 \end{array}
 \right\}.
\]
\end{thm}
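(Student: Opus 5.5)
Our plan is to adapt the classical ABP argument, with the quasiconcave envelope of $u^+$ in place of the concave envelope and the level sets of this envelope in place of the supporting hyperplanes. Set $M=\sup_\Omega u>0$; if $M\le 0$ there is nothing to prove. Extend $u^+$ by zero outside $\Omega$ and let $\Gamma$ be its quasiconcave envelope, i.e.\ the smallest function whose superlevel sets $\{\Gamma\ge t\}$ are convex. For $0<t<M$ the superlevel set $K_t=\{\Gamma\ge t\}$ is the closed convex hull of $\{u^+\ge t\}$. Since $u\le0$ on $\partial\Omega$ and $u$ is upper semicontinuous, $\{u^+\ge t\}$ is compact and contained in $\Omega$.

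\textbf{Step 1 (supporting half-spaces).} Fix $t\in(0,M)$ and a unit vector $\nu$. Maximize $x\cdot\nu$ over the compact set $\{u\ge t\}$ and call a maximizer $x$. Every $y$ with $(y-x)\cdot\nu>0$ then satisfies $u^+(y)<t\le u^+(x)$, so $x\in\Theta(u^+,\Omega)$. This defines a map from the Gauss images of the sets $\{u\ge t\}$ into $\Theta$. The underlying set-theoretic fact is the coarea identity
\[
 M\,\mathcal H^{n-1}(\partial B_1)=\int_0^M \mathcal H^{n-1}(\partial B_1)\,dt ,
\]
in which, for each level $t$, the set of directions $\partial B_1$ is covered by outward normals of $\partial K_t$ at points where $\Gamma=u^+=t$.

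\textbf{Step 2 (change of variables).} We consider the map $x\mapsto(\Gamma(x),\nu(x))$, where $\nu(x)=-\nabla\Gamma/|\nabla\Gamma|$, defined on the contact set. Its Jacobian is $|\nabla\Gamma|\cdot\prod_{i=1}^{n-1}\kappa_i$, where the $\kappa_i$ are the principal curvatures of the level set. These curvatures equal the eigenvalues of $-\widetilde D^2\Gamma/|\nabla\Gamma|$, which are nonnegative by quasiconcavity. The area formula then gives
\[
 M\,\mathcal H^{n-1}(\partial B_1)\le\int_{\Theta}|\nabla\Gamma|^{2-n}\det\bigl(-\widetilde D^2\Gamma\bigr)\,dx .
\]
Next we apply the AM--GM inequality, $\det(-\widetilde D^2\Gamma)\le\bigl(\operatorname{tr}(-\widetilde D^2\Gamma)/(n-1)\bigr)^{n-1}$, together with ellipticity, which gives $\lambda\operatorname{tr}(-\widetilde D^2\Gamma)\le -F(x,\widetilde D^2\Gamma)$ because $F(x,0)=0$ and $-\widetilde D^2\Gamma\ge0$. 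Since $\Gamma$ touches $u$ from above on the contact set, the subsolution property yields $-|\nabla\Gamma|^{(2-n)/(n-1)}F(x,\widetilde D^2\Gamma)\le f^-$. Raising this to the power $n-1$ cancels the weight $|\nabla\Gamma|^{2-n}$ exactly; this is the reason for the exponent $(2-n)/(n-1)$. The resulting integrand is $\bigl(f^-/(\lambda(n-1))\bigr)^{n-1}$, which is \eqref{ABP}.

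\textbf{Main obstacle: regularity.} $\Gamma$ is not $C^{1,1}$, and it may have vanishing gradient or flat pieces. We would first handle the case where $u$ is replaced by a semiconvex approximation, namely the sup-convolution, which remains a subsolution on a slightly smaller domain with $f$ perturbed slightly. For such $u$ we would show that $\Gamma$ is differentiable with a one-sided $C^{1,1}$ bound in the tangential directions on the contact set. We would then justify the area formula using Alexandrov-type twice differentiability along level sets, or alternatively by estimating the measure of the image of small balls directly, as in the Caffarelli--Cabr\'e proof. Points where $\nabla\Gamma=0$ only occur at the maximum level and have measure zero in $t$.

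Finally, we would pass to the limit in the sup-convolution parameter, using the upper semicontinuity of $\mathcal{L}^n$ of contact sets under uniform convergence, in the form $\limsup\Theta_\varepsilon\subseteq\Theta$, together with the continuity of $f$.
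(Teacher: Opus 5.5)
You have the paper's architecture: the quasiconcave envelope, the Gauss map of the superlevel sets $K_t$, coarea in $t$, AM--GM with the weight $|\nabla\Gamma|^{(2-n)/(n-1)}$ cancelling, a sup-convolution regularization, and stability of $\Theta$. The gap is in the step you flag as the main obstacle: with the standard sup-convolution, the Gauss map of the level sets is not Lipschitz, so the area formula does not apply.

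\textbf{Why the standard sup-convolution is not enough.} Take a point $x\in\partial K_t$ with $u^\epsilon(x)=t=u(w)-|x-w|^2/(2\epsilon)$. The only interior ball you get is $\overline{B_{|x-w|}(w)}\subseteq\{u^\epsilon\ge t\}$, and its radius is $\epsilon|\nabla u^\epsilon(x)|$. So the best one-sided tangential bound available is $|\nabla u^\epsilon|\,\kappa_i\le 1/\epsilon$. This gives no control of the curvatures, and no Lipschitz bound on the Gauss map, near critical points.

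Nothing in semiconvexity prevents $\{u^\epsilon\ge t\}$ from having a corner at a zero-gradient contact point below the top level. For example, $t-\operatorname{dist}(\cdot,C)^2$ near the vertex of a convex cone $C$ is $2$-semiconvex, and a higher bump elsewhere makes $t<\max u^\epsilon$. Such a corner is $\mathcal H^{n-1}$-null on $\partial K_t$, yet its normal cone has positive measure. Hence $\mathcal H^{n-1}(\partial B_1)\le\int_{\partial K_t\cap\{\Gamma=u\}}\det(\nabla\nu)\,d\mathcal H^{n-1}$ cannot be derived from the area formula at that level.

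Your claim that $\nabla\Gamma=0$ only at the maximum level is false. Plateaus below the maximum give counterexamples, as does the example above. The weaker statement you would actually need, that such levels form an $\mathcal L^1$-null set, is a Morse--Sard-type assertion for a merely semiconvex function. You do not prove it, and it is not available in general. Relatedly, $\Gamma$ need not be differentiable at every contact point. It also cannot be inserted into the viscosity inequality, since it is not $C^2$.

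\textbf{What the paper does instead.} The missing idea is the modified regularization $u^\epsilon(x)=\sup_{y,\,z\in\overline{B_\epsilon}}\{u(x+z-y)-|y|^2/(2\epsilon)\}$, i.e. the sup-convolution of $\max_{\overline{B_\epsilon}(\cdot)}u$.
\begin{enumerate}
\item At every point where $u^\epsilon=a$ it gives a closed ball of radius exactly $\epsilon$ inside $\{u^\epsilon\ge a\}$, independently of the gradient.
\item A Carath\'eodory argument transfers this ball to every point of $\partial K_a^\epsilon$: all points in the convex combination lie on the supporting hyperplane and are contact points.
\item So $K_a^\epsilon$ satisfies a two-sided $\epsilon$-sphere condition. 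By Lewicka--Peres, $\partial K_a^\epsilon$ is $C^{1,1}$ with a single-valued, $1/\epsilon$-Lipschitz Gauss map.
\item Critical points and non-Aleksandrov points are $\mathcal H^{n-1}$-null on a.e. level by coarea, so they contribute nothing, and no Sard-type statement is needed.
\end{enumerate}
Your formal step ``$\Gamma$ touches $u$, so apply the equation to $\Gamma$'' is replaced in the paper as follows. At Aleksandrov points of $u^\epsilon$, it compares the second fundamental form $A$ of $\partial K_a^\epsilon$ with $X=\widetilde D^2u^\epsilon(x)$. It uses the curve $t\mapsto x+t\xi+g(t\xi)e\subseteq\partial K_a^\epsilon$, along which $u^\epsilon\le a$, and obtains $0\le A\le-|p|^{-1}X$. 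It then tests the equation with the Taylor polynomial of $u^\epsilon$ plus $\rho|y-x|^2$.

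Finally, sup-convolutions of a USC function converge only in the half-relaxed sense, not uniformly. This is why the paper proves the stability of $\Theta$ in that setting.
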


Since $u^+$ is upper semicontinuous, $\Theta(u^+,\Omega)$ is relatively closed in $\Omega$, and hence Borel measurable.
The definition of viscosity subsolutions is given in Section~\ref{s2}.

\begin{rem}
If $u\in C(\overline\Omega)$ and $u^+$ is extended by zero outside $\Omega$, then the contact set can equivalently be characterized in terms of its quasiconcave envelope $\Gamma$:
\[
 \Theta(u^+,\Omega)
 =
 \{x\in\Omega:\Gamma(x)=u^+(x)\}.
\]
The quasiconcave envelope is defined in Section~\ref{s2}; see Lemma~\ref{lem4} for the precise statement.
\end{rem}

The proof of Theorem~\ref{thm1} is partly inspired by the argument of
Charro, De Philippis, Di Castro, and M\'aximo~\cite{CDDM}, where geometric information on level sets is combined with an area formula and the coarea formula to control the relevant second-order derivatives.
A distinctive feature of our argument is that the quasiconcave envelope plays the role occupied by the concave envelope in the classical ABP estimate.

The rest of the paper is organized as follows.
In Section~\ref{s2}, we introduce the notation and preliminary results.
In Section~\ref{s3}, we prove the ABP estimate.

\section{Notation and preliminaries}\label{s2}
For $r>0$ and $x\in\R^n$, we denote by $B_r(x)$ the open ball of radius $r$ and center $x$, and write $B_r=B_r(0)$.
Given $\xi,\eta\in\R^n$, we denote their tensor product by
$\xi\otimes\eta=(\xi_i\eta_j)_{1\leq i,j\leq n}$.
We write $\mathcal L^n$ and $\mathcal H^{n-1}$ for Lebesgue measure and $(n-1)$-dimensional Hausdorff measure, respectively, and $\mathcal S^n$ for the space of $n\times n$ symmetric matrices.
For a function $\phi$, we use the notation $\phi^+=\max\{\phi,0\}$ and $\phi^-=\max\{-\phi,0\}$.

We say that $\phi$ is twice differentiable at $x_0$ if
\[
\phi(x)=\phi(x_0)+\langle p,x-x_0\rangle
+\frac12\langle X(x-x_0),x-x_0\rangle+o(|x-x_0|^2)
\]
as $x\to x_0$, for some $p\in\R^n$ and $X\in\mathcal S^n$; we then write $p=\nabla\phi(x_0)$ and $X=D^2\phi(x_0)$.

For $p\in\R^n\setminus\{0\}$, set
\[
P_p:=I-\frac{p}{|p|}\otimes\frac{p}{|p|}.
\]
If $\phi\in C^2$ and $\nabla\phi(x)\neq0$, we define the projected Hessian by
\[
\widetilde D^2\phi(x):=P_{\nabla\phi(x)}D^2\phi(x)P_{\nabla\phi(x)}.
\]

For ellipticity constants $0<\lambda\leq\Lambda$, the Pucci operators are
\begin{align*}
\mathcal P^+(X)
&:=\max\{\operatorname{tr}(AX):A\in\mathcal S^n,\ \lambda I\leq A\leq\Lambda I\},\\
\mathcal P^-(X)
&:=\min\{\operatorname{tr}(AX):A\in\mathcal S^n,\ \lambda I\leq A\leq\Lambda I\}.
\end{align*}
We say that $F:\Omega\times\mathcal S^n\to\R$ is uniformly elliptic if
\[
\mathcal P^-(X-Y)\leq F(x,X)-F(x,Y)\leq\mathcal P^+(X-Y)
\]
for every $x\in\Omega$ and $X,Y\in\mathcal S^n$.

For the purposes of this paper, we use only the following subsolution condition, which imposes no condition at contact points where the test function has zero gradient; see~\cite{CIL} for general background on viscosity solutions.
\begin{dfn}\label{visc}
Let $F\in C(\Omega\times\mathcal S^n)$ and $f\in C(\overline\Omega)$.
A function $u\in\operatorname{USC}(\overline\Omega)$ is a viscosity subsolution of \eqref{F} in $\Omega$ if, whenever $\phi\in C^2(\Omega)$ touches $u$ from above at $x\in\Omega$ and $\nabla\phi(x)\neq0$, one has
\[
|\nabla\phi(x)|^{(2-n)/(n-1)}
F\bigl(x,\widetilde D^2\phi(x)\bigr)\geq f(x).
\]
\end{dfn}

\begin{rem}\label{re1}
For the proof of Theorem~\ref{thm1}, we use the following reduction. Let $\widetilde u$ be the zero extension of $u^+$ to $\R^n$. Since $u\leq0$ on $\partial\Omega$, one has $\widetilde u\in\operatorname{USC}(\R^n)$, $\widetilde u\geq0$, and
\[
\operatorname{supp}\widetilde u\subseteq\overline\Omega.
\]
If an upper test function $\phi$ touches $\widetilde u$ at a point where $\widetilde u=0$, then $\phi$ has a local minimum there and hence $\nabla\phi=0$. If $\widetilde u>0$ at the contact point, then that point lies in $\Omega$ and $\phi$ is also an upper test function for the original $u$.

We also fix continuous extensions $\widetilde F\in C(\R^n\times\mathcal S^n)$ and $\widetilde f\in C(\R^n)$ which agree with $F$ and $f$ on $\overline\Omega$, respectively, with $\widetilde F(x,0)=0$ and the same ellipticity constants $\lambda,\Lambda$. Such an extension of $F$ is obtained by a standard partition-of-unity construction on $\R^n\setminus\overline\Omega$, using convex combinations of the operators $F(x,\cdot)$; the normalization and ellipticity constants are preserved under convex combinations. Then $\widetilde u$ is a viscosity subsolution of the extended equation in $\R^n$. If $\Theta$ denotes the global contact set introduced in Section~\ref{s3}, then
\[
\Theta(\widetilde u)\cap\Omega=\Theta(u^+,\Omega).
\]
After this reduction we drop the tildes. Thus, from this point on, $u$, $F$, and $f$ are understood to be defined on all of $\R^n$, with $u\in\operatorname{USC}(\R^n)$, $u\geq0$, $\operatorname{supp}u\subseteq\overline\Omega$, and with $F$ and $f$ continuous.
\end{rem}

For $r>0$, set
\[
\Omega_r:=\{x\in\R^n:\operatorname{dist}(x,\Omega)<r\}.
\]

We next record the properties of the modified sup-convolution used in the proof.
For $u\in\operatorname{USC}(\R^n)$ and $\epsilon\in(0,1)$, define
\begin{equation}\label{supc}
u^\epsilon(x)
:=\sup_{y\in\R^n,\ z\in\overline{B_\epsilon}}
\left\{u(x+z-y)-\frac{|y|^2}{2\epsilon}\right\}.
\end{equation}

\begin{prop}\label{p1}
Let $u\in\operatorname{USC}(\R^n)$ be nonnegative with $\operatorname{supp}u\subseteq\overline\Omega$. Set $M:=\sup_{\R^n}u$ and
\[
c_\epsilon:=2\epsilon+\sqrt{2\epsilon M}.
\]
Then:
\begin{enumerate}
\renewcommand{\labelenumi}{(\roman{enumi})}
\item $u^\epsilon$ is $1/\epsilon$-semiconvex, i.e.
$u^\epsilon(\cdot)+|\cdot|^2/(2\epsilon)$ is convex.
\item $u^\epsilon$ is globally Lipschitz in $\R^n$.
\item
\[
\operatorname{supp}u^\epsilon\subseteq\Omega_{c_\epsilon}.
\]
\item Suppose that $F\in C(\R^n\times\mathcal S^n)$ satisfies $F(x,0)=0$ for every $x\in\R^n$ and is uniformly elliptic with constants $\lambda,\Lambda$, that $f\in C(\R^n)$, and that $u$ is a viscosity subsolution of \eqref{F} in $\{u>0\}$. Define, for $x\in\R^n$,
\begin{align*}
F_\epsilon(x,X)&:=\sup_{q\in B_{c_\epsilon}(x)}F(q,X),\\
f_\epsilon(x)&:=\inf_{q\in B_{c_\epsilon}(x)}f(q).
\end{align*}
Then $F_\epsilon\in C(\R^n\times\mathcal S^n)$, $f_\epsilon\in C(\R^n)$, $F_\epsilon(x,0)=0$, and $F_\epsilon$ is uniformly elliptic with the same constants $\lambda,\Lambda$. Moreover, $u^\epsilon$ is a viscosity subsolution of
\[
|\nabla u^\epsilon|^{(2-n)/(n-1)}
F_\epsilon\bigl(x,\widetilde D^2u^\epsilon\bigr)\geq f_\epsilon(x)
\qquad\text{in }\{u^\epsilon>0\}.
\]
\item For every sequence $x_\epsilon\to x$,
\[
\limsup_{\epsilon\to0}u^\epsilon(x_\epsilon)\leq u(x).
\]
\item For every $x\in\R^n$,
\[
u^\epsilon(x)\longrightarrow u(x)\qquad\text{as }\epsilon\to0.
\]
\end{enumerate}
\end{prop}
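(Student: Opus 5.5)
I would first rewrite the modified sup-convolution as an ordinary sup-convolution of an auxiliary function. Set $v(w):=\sup_{z\in\overline{B_\epsilon}}u(w+z)$. Then
\[
u^\epsilon(x)=\sup_{y}\bigl\{v(x-y)-|y|^2/(2\epsilon)\bigr\}.
\]
Since $\overline{B_\epsilon}$ is compact and $u$ is upper semicontinuous, $v\in\operatorname{USC}(\R^n)$, and $0\leq v\leq M$. Its support lies in the closed $\epsilon$-neighbourhood of $\overline\Omega$, and the supremum defining $u^\epsilon$ is attained at some $y$ with $|y|\leq\sqrt{2\epsilon M}$.

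Items (i)--(iii) and (v)--(vi) follow from standard arguments.

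\textbf{(i)} We have $u^\epsilon(x)+|x|^2/(2\epsilon)=\sup_{w}\{v(w)-|w|^2/(2\epsilon)+\langle x,w\rangle/\epsilon\}$, a supremum of affine functions of $x$, hence convex.

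\textbf{(ii)} Because $|y|\leq\sqrt{2\epsilon M}$ at any maximizer, $u^\epsilon$ is locally a supremum of uniformly Lipschitz functions. Combined with semiconvexity and the fact that $u^\epsilon$ vanishes outside a bounded set, this gives a global Lipschitz bound.

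\textbf{(iii)} If $u^\epsilon(x)>0$, then $v(x-y)>0$ at a maximizer, so $x-y+z\in\overline\Omega$ for some $|z|\leq\epsilon$ and $|y|\leq\sqrt{2\epsilon M}$. Thus $\operatorname{dist}(x,\Omega)\leq\epsilon+\sqrt{2\epsilon M}<c_\epsilon$.

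\textbf{(v)} Suppose $u^\epsilon(x_\epsilon)=u(x_\epsilon+z_\epsilon-y_\epsilon)-|y_\epsilon|^2/(2\epsilon)$ with $|z_\epsilon|\leq\epsilon$ and $|y_\epsilon|\to0$. Upper semicontinuity of $u$ then gives $\limsup_{\epsilon\to0} u^\epsilon(x_\epsilon)\leq u(x)$.

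\textbf{(vi)} Taking $y=z=0$ shows $u^\epsilon\geq u$, which combined with (v) gives pointwise convergence.

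\textbf{(iv)} The main content is (iv). For the statements about $F_\epsilon$ and $f_\epsilon$: a supremum of uniformly elliptic operators is uniformly elliptic with the same constants, since $F(q,X)\leq F(q,Y)+\mathcal P^+(X-Y)$ and $F(q,X)\geq F(q,Y)+\mathcal P^-(X-Y)$ are preserved after taking $\sup_q$ on both sides. Clearly $F_\epsilon(x,0)=0$. Continuity of $F_\epsilon$ and $f_\epsilon$ follows from uniform continuity of $F$ and $f$ on compact sets together with continuity of the ball $B_{c_\epsilon}(x)$ in $x$.

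For the subsolution property, let $\phi\in C^2$ touch $u^\epsilon$ from above at $x_0$, with $u^\epsilon(x_0)>0$ and $\nabla\phi(x_0)\neq0$. Choose a maximizing pair $(y_0,z_0)$ and set $\hat x=x_0+z_0-y_0$. Then $u(\hat x)=u^\epsilon(x_0)+|y_0|^2/(2\epsilon)>0$, so $\hat x\in\{u>0\}$.

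The standard translation argument applies: for $x$ near $\hat x$,
\[
u(x)-\frac{|y_0|^2}{2\epsilon}\leq u^\epsilon(x-z_0+y_0)\leq\phi(x-z_0+y_0),
\]
with equality at $\hat x$. Hence $\psi(x):=\phi(x-z_0+y_0)+|y_0|^2/(2\epsilon)$ touches $u$ from above at $\hat x$, with $\nabla\psi(\hat x)=\nabla\phi(x_0)\neq0$ and $D^2\psi(\hat x)=D^2\phi(x_0)$.

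The subsolution property of $u$ then gives $|\nabla\phi(x_0)|^{(2-n)/(n-1)}F(\hat x,\widetilde D^2\phi(x_0))\geq f(\hat x)$. Since $|\hat x-x_0|\leq\epsilon+\sqrt{2\epsilon M}<c_\epsilon$, we have $\hat x\in B_{c_\epsilon}(x_0)$, so $F(\hat x,\cdot)\leq F_\epsilon(x_0,\cdot)$ and $f(\hat x)\geq f_\epsilon(x_0)$. The weight $|\nabla\phi|^{(2-n)/(n-1)}$ is positive, so multiplying through preserves the inequality, and the claim follows.

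\textbf{Main obstacle.} The delicate point is the bookkeeping ensuring that $\hat x$ lies strictly inside $B_{c_\epsilon}(x_0)$, which requires the bound on $|y_0|$. One also needs the maximizers to exist. This uses upper semicontinuity of $u$, compactness of $\overline{B_\epsilon}$, and the fact that $|y|^2/(2\epsilon)>M$ for large $|y|$, which forces any maximizing $y$ into a bounded set (and into $|y|\leq\sqrt{2\epsilon M}$ because $u^\epsilon\geq0$).
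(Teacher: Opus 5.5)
Your proposal is correct and follows essentially the same route as the paper. Both rewrite $u^\epsilon$ as an ordinary sup-convolution of $w\mapsto\max_{z\in\overline{B_\epsilon}}u(w+z)$, use the bound $|y|<\sqrt{2\epsilon M}$ at a maximizer to get (iii) and to place the transferred contact point $x+z-y$ inside $B_{c_\epsilon}(x)$, and obtain ellipticity of $F_\epsilon$ by taking suprema in the Pucci inequalities. Your explicit translation argument for the subsolution property in (iv) simply spells out the step the paper delegates to the standard sup-convolution argument.
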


\begin{proof}
Write
\[
\widetilde u_\epsilon(w):=\max_{z\in\overline{B_\epsilon}}u(w+z).
\]
Then
\[
u^\epsilon(x)=\sup_{w\in\R^n}
\left\{\widetilde u_\epsilon(w)-\frac{|x-w|^2}{2\epsilon}\right\}.
\]
Properties (i), (ii), (v), and (vi), together with the viscosity-subsolution assertion in (iv), follow from the standard sup-convolution argument; see Appendix A in~\cite{CCKS}. For (iv), the transferred contact point is of the form $q_\epsilon=x+z_\epsilon-y_\epsilon$, and the estimate below shows that $|q_\epsilon-x|<c_\epsilon$.

Suppose that $u^\epsilon(x)>0$ and choose a maximizing pair
$(y_\epsilon,z_\epsilon)\in\R^n\times\overline{B_\epsilon}$ in \eqref{supc}. Set
\[
q_\epsilon:=x+z_\epsilon-y_\epsilon.
\]
Then
\[
0<u^\epsilon(x)
=u(q_\epsilon)-\frac{|y_\epsilon|^2}{2\epsilon}
\leq M-\frac{|y_\epsilon|^2}{2\epsilon}.
\]
In particular $u(q_\epsilon)>0$, so
$q_\epsilon\in\{u>0\}\subset\operatorname{supp}u$, and
$|y_\epsilon|<\sqrt{2\epsilon M}$. Hence
\[
\operatorname{dist}(x,\operatorname{supp}u)
\leq|x-q_\epsilon|
\leq\epsilon+\sqrt{2\epsilon M}.
\]
Thus
\[
\{u^\epsilon>0\}
\subset
\{x:\operatorname{dist}(x,\operatorname{supp}u)
<\epsilon+\sqrt{2\epsilon M}\}.
\]
By (ii), $u^\epsilon$ is continuous, and therefore
\[
\operatorname{supp}u^\epsilon
\subset
\{x:\operatorname{dist}(x,\operatorname{supp}u)
\leq\epsilon+\sqrt{2\epsilon M}\}
\subseteq\Omega_{c_\epsilon},
\]
where we used $\operatorname{supp}u\subseteq\overline\Omega$. This proves (iii).

Since $F$ and $f$ are continuous and $c_\epsilon$ is fixed, the suprema and infima in (iv) agree with the corresponding maxima and minima over $\overline{B_{c_\epsilon}(x)}$. A standard compactness argument therefore gives
\[
F_\epsilon\in C(\R^n\times\mathcal S^n),
\qquad
f_\epsilon\in C(\R^n).
\]
Since $F(q,0)=0$ for every $q\in\R^n$, we have $F_\epsilon(x,0)=0$. Moreover,
\begin{align*}
F_\epsilon(x,X)-F_\epsilon(x,Y)
&\leq\sup_{q\in B_{c_\epsilon}(x)}
\bigl(F(q,X)-F(q,Y)\bigr)
\leq\mathcal P^+(X-Y),\\
F_\epsilon(x,X)-F_\epsilon(x,Y)
&\geq\inf_{q\in B_{c_\epsilon}(x)}
\bigl(F(q,X)-F(q,Y)\bigr)
\geq\mathcal P^-(X-Y),
\end{align*}
so $F_\epsilon$ has the same ellipticity constants. This completes the proof of (iv).

\end{proof}

We next recall the quasiconcave envelope and its elementary level-set properties.
\begin{dfn}\label{def1}
A function $\omega:\R^n\to\R$ is quasiconcave if $\{\omega\geq a\}$ is convex for every $a\in\R$.
For $u\in\operatorname{USC}(\R^n)$ with compact support, the quasiconcave envelope of $u^+$ is
\[
\Gamma(x):=\inf\{\omega(x):\omega\text{ is quasiconcave and }\omega\geq u^+\text{ in }\R^n\}.
\]
\end{dfn}

Here and below, $\operatorname{Conv}(A)$ denotes the convex hull of $A\subseteq\R^n$.

\begin{prop}\label{prop-qc}
Let $u\in C(\R^n)$ have compact support, and let $\Gamma$ be the quasiconcave envelope of $u^+$.
Then the following properties hold.
\begin{enumerate}
\renewcommand{\labelenumi}{(\roman{enumi})}
\item For every $a>0$,
\begin{equation}\label{qc-levels-closed}
\operatorname{Conv}(\{u^+\geq a\})=\{\Gamma\geq a\}.
\end{equation}
Moreover, for every $a\geq0$,
\begin{equation}\label{qc-levels-open}
\operatorname{Conv}(\{u^+>a\})=\{\Gamma>a\}.
\end{equation}
\item Every modulus of continuity of $u^+$ is also a modulus of continuity of $\Gamma$.
In particular, if $u$ is $L$-Lipschitz, then $\Gamma$ is $L$-Lipschitz.
\end{enumerate}
\end{prop}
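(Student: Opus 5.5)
The natural route is to identify the quasiconcave envelope level by level, using the characterization $\Gamma(x)=\sup\{a:x\in\operatorname{Conv}(\{u^+\geq a\})\}$. Define $\Gamma_0(x):=\sup\{a\geq0: x\in \operatorname{Conv}(\{u^+\geq a\})\}$, with the convention that $\operatorname{Conv}(\{u^+\geq 0\})=\R^n$. We then argue in three steps.

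\textbf{Step 1.} We show that $\Gamma_0$ is quasiconcave and $\Gamma_0\geq u^+$. The inequality is immediate, since $x\in\{u^+\geq u^+(x)\}$. For quasiconcavity, fix $b>0$; the set $\{\Gamma_0\geq b\}$ equals $\bigcap_{a<b}\operatorname{Conv}(\{u^+\geq a\})$. This holds because the family $a\mapsto\operatorname{Conv}(\{u^+\geq a\})$ is decreasing, so $\Gamma_0(x)\geq b$ is equivalent to $x$ belonging to every hull with $a<b$. An intersection of convex sets is convex. For $b\leq0$ the superlevel set is all of $\R^n$.

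\textbf{Step 2.} We show $\Gamma\geq\Gamma_0$. Any quasiconcave $\omega\geq u^+$ has $\{\omega\geq a\}$ convex and containing $\{u^+\geq a\}$, hence containing its hull. So $\omega\geq\Gamma_0$, and therefore $\Gamma=\Gamma_0$.

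\textbf{Step 3.} We establish the level-set identities.
\begin{itemize}
\item For $a>0$, the set $K_a=\{u^+\geq a\}$ is compact, by continuity and compact support. In $\R^n$ the convex hull of a compact set is compact (Carath\'eodory). We claim $\bigcap_{a'<a}\operatorname{Conv}(K_{a'})=\operatorname{Conv}(K_a)$. The inclusion $\supseteq$ is clear. For $\subseteq$, take $x$ in the intersection and $a_k\uparrow a$. Write $x=\sum_{i=0}^n t_i^k y_i^k$ with $y_i^k\in K_{a_k}$, which are all contained in the compact set $K_{a_1}$. Pass to a subsequence along which the $t^k$ and $y^k$ converge. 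Upper semicontinuity (here, continuity) of $u^+$ gives $u^+(y_i)\geq a$ in the limit, so $x\in\operatorname{Conv}(K_a)$. This gives \eqref{qc-levels-closed}; the compactness step is the main obstacle, and it is exactly where $a>0$ is needed.
\item For the open version with $a\geq0$: we have $\{\Gamma>a\}=\bigcup_{b>a}\{\Gamma\geq b\}=\bigcup_{b>a}\operatorname{Conv}(K_b)$. Since $\{u^+>a\}=\bigcup_{b>a}K_b$ is an increasing union, its hull equals the union of the hulls: any point of the hull is a finite convex combination, and the finitely many points involved lie in a common $K_b$.
\end{itemize}

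\textbf{Step 4.} We prove (ii). Let $\rho$ be a modulus of $u^+$, fix $x,h$, and set $a=\Gamma(x)$. We may assume $a>0$; otherwise the bound $\Gamma(x+h)\geq\Gamma(x)-\rho(|h|)$ is trivial since $\Gamma\geq0$.
\begin{itemize}
\item By (i), $x=\sum t_i y_i$ with $u^+(y_i)\geq a$.
\item Then $x+h=\sum t_i(y_i+h)$ and $u^+(y_i+h)\geq a-\rho(|h|)$.
\item If $a-\rho(|h|)>0$, this places $x+h$ in $\operatorname{Conv}(\{u^+\geq a-\rho(|h|)\})$, so $\Gamma(x+h)\geq\Gamma(x)-\rho(|h|)$. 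The remaining case is trivial.
\end{itemize}
By symmetry in $x$ and $x+h$, $|\Gamma(x+h)-\Gamma(x)|\leq\rho(|h|)$. The Lipschitz case is $\rho(t)=Lt$.
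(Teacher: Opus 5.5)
Your proof is correct, and it departs from the paper in two places.

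For (i), the paper gives no argument and cites Colesanti--Salani. You supply a self-contained proof. You identify $\Gamma$ with $\Gamma_0(x)=\sup\{a\ge 0: x\in\operatorname{Conv}(\{u^+\ge a\})\}$ and write $\{\Gamma\ge b\}=\bigcap_{a<b}\operatorname{Conv}(\{u^+\ge a\})$. Carath\'eodory's theorem plus compactness then collapses that intersection to $\operatorname{Conv}(\{u^+\ge a\})$, and an increasing-union argument gives the open identity. This is sound. You should state explicitly that the $a_k$ are taken in $(0,a)$, so that $\{u^+\ge a_1\}$ is compact.

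For (ii), the routes genuinely differ. The paper never uses (i). It notes that $\Gamma$ is quasiconcave, being an infimum of quasiconcave functions. Hence for fixed $y$ the function $x\mapsto\Gamma(x+y)+\omega(|y|)$ is again quasiconcave and dominates $u^+$, and minimality gives $\Gamma(x+y)+\omega(|y|)\ge\Gamma(x)$. You instead translate an explicit convex-combination representation of $x$.

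The paper's minimality argument is shorter. It needs neither compactness nor any description of the level sets, so it applies to any upper semicontinuous $u^+$ whose envelope is finite. Your argument is more hands-on and ties (ii) to the Carath\'eodory representation $K_a=\operatorname{Conv}(E_a)$ that the paper itself exploits later, in Case~2 of Lemma~\ref{l1}.

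Your (ii) in fact needs only $\Gamma=\Gamma_0$, not the closed-level identity. Take $a'<\Gamma(x)$ with $x\in\operatorname{Conv}(\{u^+\ge a'\})$, shift by $h$ to get $\Gamma(x+h)\ge a'-\rho(|h|)$, and let $a'\uparrow\Gamma(x)$. That version avoids the compactness step altogether.
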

\begin{proof}
The level-set characterization in (i) is standard; see, e.g.,~\cite[Section~2]{ColSal03}.
For (ii), let $\omega$ be a modulus of continuity of $u^+$.
For every $x,y\in\R^n$,
\[
\Gamma(x+y)+\omega(|y|)\geq u^+(x+y)+\omega(|y|)\geq u^+(x).
\]
The function $x\mapsto\Gamma(x+y)+\omega(|y|)$ is quasiconcave and dominates $u^+$, so the minimality of $\Gamma$ yields
\[
\Gamma(x+y)+\omega(|y|)\geq\Gamma(x).
\]
Replacing $y$ by $-y$ gives
$|\Gamma(x+y)-\Gamma(x)|\leq\omega(|y|)$.
\end{proof}

\section{ABP estimates for the weighted \texorpdfstring{$1$}{1}-Laplacian}\label{s3}
    Throughout this section we work after the reduction in Remark~\ref{re1}; in particular, all functions and all envelope and contact-set constructions are understood on $\R^n$.

    For a function $v$ on $\R^n$, we write
    \[
    \Theta(v)
    :=
    \left\{
    x\in\R^n:
    \begin{array}{l}
    \text{there exists }\nu\in\partial B_1\text{ such that}\\[1mm]
    v(y)\leq v(x)\text{ whenever }(y-x)\cdot\nu>0
    \end{array}
    \right\}.
    \]
    We first give a formal argument that highlights the main idea of the proof.
    Let $\Gamma$ be the quasiconcave envelope of $u$.
    For each $a>0$, we write $K_a=\{\Gamma\geq a\}$.
    Let $\nu(x)=-\nabla\Gamma(x)/|\nabla \Gamma(x)|$ be the outward unit normal of $K_a$ at $x$, and let $\nabla^{\partial K_a}\nu$ be the Weingarten operator.
    The second-order obstacle characterization of quasiconcave envelopes obtained by Barron--Goebel--Jensen~\cite[Corollary~6.2]{BGJ} suggests, at least formally, that
    \begin{equation}
        \begin{split}
        |\nabla\Gamma|\det\nabla^{\partial K_a}\nu&=0\quad\text{a.e. on }\{\Gamma>u\}\cap \partial K_a,\quad\text{and}\\
        0\leq |\nabla \Gamma|\det\nabla^{\partial K_a}\nu&\leq \left(\frac{f^-}{\lambda(n-1)}\right)^{n-1}\quad\text{a.e. on }\{\Gamma=u>0\}\cap\partial K_a.
        \end{split}
        \label{e32}
    \end{equation}
By the area formula for the Gauss map and the coarea formula (see~\cite{MR257325}), we formally compute
    \begin{align}
        \sup_{\R^n}\Gamma\mathcal{H}^{n-1}(\partial B_1)
        &=\int_0^{\sup_{\R^n}\Gamma}\mathcal{H}^{n-1}(\partial B_1)da\nonumber\\
        &=\int_0^{\sup_{\R^n}u}\int_{\partial K_a}\det\left(\nabla^{\partial K_a}\nu \right)d\mathcal{H}^{n-1}da\nonumber\\
        &=\int_{\cup_{a>0}\partial K_a}|\nabla\Gamma|\det\left(\nabla^{\partial K_{\Gamma(x)}}\nu \right)dx\label{e2}\\
        &\leq (\lambda(n-1))^{1-n}\int_{\{\Gamma=u\}\cap\{u>0\}}(f^-)^{n-1}dx.\nonumber
    \end{align}
    Here we used the notation $K_{\Gamma(x)}=\{y\in\R^n:\Gamma(y)\geq \Gamma(x)\}$.

    Making this argument rigorous requires the Lipschitz regularity of $\Gamma$, the $C^{1,1}$ regularity of its level sets, and a suitable weak interpretation of \eqref{e32}.
    In classical ABP proofs, the standard sup-convolution provides the regularization needed for nonsmooth solutions. Here we use the modified regularization \eqref{supc}, which also yields the level-set geometry needed below.
    To obtain the $C^{1,1}$ regularity of the regularized level sets, we adapt the supporting-hyperplane argument for concave envelopes in \cite{CafCab} to the quasiconcave setting and then use the supporting-ball characterization in \cite{LP20}.
    We relate the coincidence set for continuous regularizations to the global upper contact set in Lemma~\ref{lem4}, and establish the stability of the latter in Lemma~\ref{lem3}. The passage through $\Theta$ is important for the limiting argument: coincidence sets of quasiconcave envelopes are not stable in general under the upper-semicontinuous convergence arising here, whereas the half-space formulation of $\Theta$ is stable under the half-relaxed convergence in Proposition~\ref{p1}(v)--(vi).

    We now fix $\epsilon\in(0,1)$, let $u^{\epsilon}$ be given by \eqref{supc}, and denote its quasiconcave envelope by $\Gamma^{\epsilon}$.
    Let $K^{\epsilon}_a=\{\Gamma^{\epsilon}\geq a\}$.
    The regularity of $\partial K^{\epsilon}_a$ follows from the following two-sided supporting sphere and hyperplane condition.
    \begin{lem}\label{l1}
        For every $a>0$ and $x\in \partial K^{\epsilon}_a$, there exists a unique $\nu=\nu(x)\in \partial B_1$ such that
        \[
        \overline{B_\epsilon(x-\epsilon \nu)}\subseteq K^{\epsilon}_a\subseteq \{y\in\R^n:(y-x)\cdot \nu\leq  0\}.
        \]
    \end{lem}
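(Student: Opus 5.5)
The plan is to obtain the interior ball from the structure of $u^\epsilon$: its superlevel sets are unions of closed $\epsilon$-balls. The supporting half-space will then come from convexity of $K^\epsilon_a$, and uniqueness of $\nu$ from the interior ball.

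\emph{Step 1: superlevel sets of $u^\epsilon$ are unions of balls.} Writing $\widetilde u_\epsilon(w)=\max_{z\in\overline{B_\epsilon}}u(w+z)$ as in the proof of Proposition~\ref{p1}, we have
\[
u^\epsilon(x)=\sup_{y}\Bigl\{\max_{z\in\overline{B_\epsilon}}u(x-y+z)-\tfrac{|y|^2}{2\epsilon}\Bigr\}.
\]
Equivalently, $u^\epsilon(x)=\sup_{z\in\overline{B_\epsilon}}v(x+z)$, where $v(x)=\sup_y\{u(x-y)-|y|^2/(2\epsilon)\}$ is the standard sup-convolution. Hence if $u^\epsilon(x_0)\ge a$, then for every $x\in\overline{B_\epsilon(x_0)}$ we get $u^\epsilon(x)\ge v(x_0)$. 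Care is needed here because $u^\epsilon(x_0)$ is $v(x_0+z_0)$ for some $z_0$, not $v(x_0)$. If the supremum in $z$ is attained at $z_0$, then $c:=x_0+z_0$ satisfies $v(c)=u^\epsilon(x_0)\ge a$, and $u^\epsilon\ge v(c)\ge a$ on $\overline{B_\epsilon(c)}$. Attainment holds because $v$ is continuous (it is semiconvex and bounded), so the supremum over the compact ball $\overline{B_\epsilon}$ is a maximum. Thus $\{u^\epsilon\ge a\}$ is a union of closed balls $\overline{B_\epsilon(c)}$ with $v(c)\ge a$. Moreover $x_0\in\overline{B_\epsilon(c)}$, so the set $C_a:=\{c: v(c)\ge a\}$ satisfies $\{u^\epsilon\ge a\}=C_a+\overline{B_\epsilon}$.

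\emph{Step 2: convex hull.} By Proposition~\ref{prop-qc}(i), which applies since $u^\epsilon$ is continuous with compact support by Proposition~\ref{p1}(ii),(iii), for $a>0$ we have
\[
K^\epsilon_a=\operatorname{Conv}(C_a+\overline{B_\epsilon})=\operatorname{Conv}(C_a)+\overline{B_\epsilon}.
\]
Here we use that the convex hull of a Minkowski sum is the sum of the convex hulls. Note that $C_a$ is compact, since $v$ is continuous and $\{v\ge a\}$ is bounded, so $\operatorname{Conv}(C_a)$ is compact and convex. Hence $K^\epsilon_a=D+\overline{B_\epsilon}$ with $D$ compact and convex, and in particular $K^\epsilon_a$ is closed.

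\emph{Step 3: the two inclusions and uniqueness.} Let $x\in\partial K^\epsilon_a$. Since $K^\epsilon_a=\{y:\operatorname{dist}(y,D)\le\epsilon\}$, we have $\operatorname{dist}(x,D)=\epsilon$. Let $p$ be the nearest-point projection of $x$ onto $D$ and set $\nu=(x-p)/\epsilon$. Then $\overline{B_\epsilon(x-\epsilon\nu)}=\overline{B_\epsilon(p)}\subseteq K^\epsilon_a$. By the projection property, $D\subseteq\{(y-p)\cdot\nu\le0\}$, and therefore
\[
D+\overline{B_\epsilon}\subseteq\{(y-p)\cdot\nu\le\epsilon\}=\{(y-x)\cdot\nu\le0\}.
\]
For uniqueness, suppose $\nu'$ is another unit vector satisfying the statement. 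The ball $\overline{B_\epsilon(x-\epsilon\nu)}$ must then lie in $\{(y-x)\cdot\nu'\le0\}$. But this ball contains $x-\epsilon\nu+\epsilon\nu'$, for which $(y-x)\cdot\nu'=\epsilon(1-\nu\cdot\nu')$. This forces $\nu\cdot\nu'\ge1$, so $\nu'=\nu$.

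The main obstacle is Step 1: correctly extracting that the superlevel sets of $u^\epsilon$ are $\epsilon$-neighbourhoods, including the attainment issue. That step is exactly where the extra $z\in\overline{B_\epsilon}$ in \eqref{supc} is used. Steps 2 and 3 are elementary convex geometry.
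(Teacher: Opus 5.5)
Your proof is correct, but it takes a different route from the paper's.

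\textbf{The paper's route.} It gets the supporting half-space from Hahn--Banach and builds the interior ball by a case split. At a boundary point with $u^\epsilon=\Gamma^\epsilon=a$, the maximizing shift $z$ in \eqref{supc} gives a ball $\overline{B_\epsilon(x_0)}\subseteq\{u^\epsilon\ge a\}$ containing $x$, and this ball must be tangent to the supporting hyperplane. At a non-contact boundary point, Carath\'eodory writes $x$ as a convex combination of points of $\{u^\epsilon\ge a\}$. All of these points lie on the supporting hyperplane, so they are contact points, and their balls are averaged.

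\textbf{Your route.} You turn the Case~1 observation into the global identity $\{u^\epsilon\ge a\}=\{v\ge a\}+\overline{B_\epsilon}$. Hence $K_a^\epsilon=D+\overline{B_\epsilon}$ is the closed $\epsilon$-parallel body of the compact convex set $D=\operatorname{Conv}\{v\ge a\}$. Both inclusions then follow from the metric projection onto $D$. Uniqueness follows from a one-line computation with the interior ball. No case analysis or Hahn--Banach is needed. You assert boundedness of $\{v\ge a\}$ without proof; it is immediate from $v\le u^\epsilon$ and Proposition~\ref{p1}(iii). Your uniqueness argument uses only that $\nu'$ is a supporting normal. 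So it gives the uniqueness of the supporting normal that Lemma~\ref{lem5} later relies on, as the paper's argument does.

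\textbf{What each approach buys.} Your route gives more than the statement. The explicit formula $\nu(x)=(x-\pi_D(x))/\epsilon$, combined with firm nonexpansiveness of $\pi_D$, shows directly that the Gauss map is $1/\epsilon$-Lipschitz. It also shows that $\partial K_a^\epsilon=\{\operatorname{dist}(\cdot,D)=\epsilon\}$ is $C^{1,1}$. So Corollary~\ref{cor1} could be obtained without the two-sided supporting sphere characterization of \cite{LP20}. The paper's route stays closer to the Caffarelli--Cabr\'e supporting-hyperplane argument. It also makes explicit that non-contact boundary points lie on flat pieces spanned by contact points.
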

    \begin{proof}
        The existence of the supporting hyperplane is an immediate consequence of the Hahn--Banach theorem applied to $K^{\epsilon}_a$ and $x\in\partial K^{\epsilon}_a$.
        Note that $\nu$ is not unique at this stage.
        We fix $\nu$ and write $H_x=\{y\in\R^n:(y-x)\cdot \nu=0\}$ in what follows.
        
        It remains to prove the existence of an interior supporting ball.
        We consider two cases: \textbf{Case 1}: $x\in \partial K^{\epsilon}_a\cap \{u^{\epsilon}=\Gamma^{\epsilon}\}$ and \textbf{Case 2}: $x\in \partial K^{\epsilon}_a\cap \{u^{\epsilon}<\Gamma^{\epsilon}\}$. 

        \medskip
\noindent\textbf{Case 1.}
        Let $x_0\in \overline{B_\epsilon(x)}$ be such that
        \[
        a=u^{\epsilon}(x)=\sup_{y\in\R^n}\left( u(x_0-y)-\frac{1}{2\epsilon}|y|^2\right).
        \]
        Then, by the definition of $u^{\epsilon}$,
        $\overline{B_\epsilon(x_0)}\subseteq \{u^{\epsilon}\geq a\}\subseteq K^{\epsilon}_a$.
        Since $x\in\overline{B_\epsilon(x_0)}$ and $x\in\partial K_a^\epsilon$, necessarily $x\in\partial B_\epsilon(x_0)$. The supporting hyperplane $H_x$ is therefore tangent to this ball at $x$, and hence $x_0=x-\epsilon\nu$.

        \medskip
\noindent\textbf{Case 2.}
        Set $E_a=\{u^\epsilon\geq a\}$, so that $K_a^\epsilon=\operatorname{Conv}(E_a)$. By Carath\'eodory's theorem,
        \[
        x=\sum_{i=1}^{n+1}\lambda_i x_i,
        \qquad x_i\in E_a,\quad \lambda_i\geq0,\quad \sum_{i=1}^{n+1}\lambda_i=1.
        \]
        Since $K_a^\epsilon$ lies in the supporting halfspace $\{(y-x)\cdot\nu\leq0\}$,
        \[
        0=\sum_{i=1}^{n+1}\lambda_i(x_i-x)\cdot\nu,
        \qquad (x_i-x)\cdot\nu\leq0.
        \]
        Hence every $x_i$ with $\lambda_i>0$ belongs to $H_x$. Discarding zero coefficients and relabeling, we may assume
        \[
        x_i\in E_a\cap H_x\subseteq\partial K_a^\epsilon\cap\{u^\epsilon=\Gamma^\epsilon\}
        \quad\text{for all }i.
        \]
        Applying Case 1 at these contact points gives
        $\overline{B_\epsilon(x_i-\epsilon\nu)}\subseteq K_a^\epsilon$. Thus, for $y\in B_\epsilon$,
        \[
        x-\epsilon\nu+y
        =\sum_{i=1}^{n+1}\lambda_i(x_i-\epsilon\nu+y)\in K_a^\epsilon.
        \]
        Since $K_a^\epsilon$ is closed,
        \[
        \overline{B_\epsilon(x-\epsilon\nu)}\subseteq K_a^\epsilon.
        \]

        Finally, the supporting normal is unique. Indeed, if $\nu'\in\partial B_1$ is another supporting normal at $x$, then its supporting hyperplane must be tangent at $x$ to the ball
        $\overline{B_\epsilon(x-\epsilon\nu)}\subseteq K_a^\epsilon$. As in Case~1, the center of this ball must therefore be $x-\epsilon\nu'$. Hence
        \[
        x-\epsilon\nu=x-\epsilon\nu',
        \]
        and thus $\nu=\nu'$.
    \end{proof}
    Lemma~\ref{l1} gives an interior supporting ball of radius $\epsilon$ at every boundary point, while the supporting halfspace yields an exterior supporting ball of the same radius. Thus $K_a^\epsilon$ satisfies the uniform two-sided supporting $\epsilon$-sphere condition. By Theorem~1 and Corollary~2 in \cite{LP20}, after scaling, $\partial K_a^\epsilon$ is a $C^{1,1}$ hypersurface and its outward unit normal $\nu$ is $1/\epsilon$-Lipschitz. Hence $\nu$ is differentiable $\mathcal H^{n-1}$-almost everywhere and $\|\nabla^{\partial K_a^\epsilon}\nu\|\leq1/\epsilon$ wherever it is differentiable. Since $K_a^\epsilon$ is convex, its Weingarten operator is nonnegative semidefinite with our sign convention. Therefore:
    \begin{cor}\label{cor1}
        $\partial K^{\epsilon}_a$ is a $C^{1,1}$ hypersurface and for $\mathcal{H}^{n-1}$-almost every $x\in \partial K^\epsilon_a$,
        \[
        0\leq \langle\nabla^{\partial K^{\epsilon}_{a}}\nu(x)\xi,\xi\rangle\leq \frac{1}{\epsilon}|\xi|^2\quad\text{for }\xi\in T_x(\partial K^\epsilon_a),
        \]
        where $T_x(\partial K^\epsilon_a)$ is the tangent space of $\partial K^\epsilon_a$ at $x\in \partial K^\epsilon_a$.
    \end{cor}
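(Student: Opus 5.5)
The plan is to deduce Corollary~\ref{cor1} directly from Lemma~\ref{l1}, which already gives a two-sided supporting-sphere condition. Fix $a>0$ with $K^\epsilon_a\neq\emptyset$; by Proposition~\ref{p1}(iii) and Proposition~\ref{prop-qc}(i), $K^\epsilon_a$ is a compact convex set. At each $x\in\partial K^\epsilon_a$, Lemma~\ref{l1} provides the interior ball $\overline{B_\epsilon(x-\epsilon\nu)}\subseteq K^\epsilon_a$. The supporting halfspace $\{(y-x)\cdot\nu\leq0\}$ contains $K^\epsilon_a$, so the exterior ball $B_\epsilon(x+\epsilon\nu)$ lies in the complement. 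Hence $K^\epsilon_a$ satisfies the uniform interior and exterior $\epsilon$-ball condition. Rescaling by $1/\epsilon$ reduces this to unit balls, and we then invoke Theorem~1 and Corollary~2 of \cite{LP20}. They give that $\partial K^\epsilon_a$ is a $C^{1,1}$ hypersurface whose outward unit normal is $1/\epsilon$-Lipschitz, and that this normal coincides with the $\nu(x)$ from Lemma~\ref{l1}, by the uniqueness proved there.

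For a direct check of the Lipschitz bound, independent of \cite{LP20}, take $x,x'\in\partial K^\epsilon_a$ with normals $\nu,\nu'$. Since $x'\in K^\epsilon_a$ lies outside the exterior ball $B_\epsilon(x+\epsilon\nu)$, we have $|x'-x-\epsilon\nu|^2\geq\epsilon^2$, that is, $|x'-x|^2\geq2\epsilon(x'-x)\cdot\nu$. Since $\overline{B_\epsilon(x'-\epsilon\nu')}\subseteq K^\epsilon_a$ lies in the halfspace at $x$, we have $(x'-x)\cdot\nu\leq-\epsilon+\epsilon\nu'\cdot\nu$. Doing the same with the roles of $x$ and $x'$ swapped and adding the two pairs of inequalities gives
\[
\epsilon|\nu-\nu'|^2\leq|x-x'|^2/\epsilon,
\]
which is the $1/\epsilon$-Lipschitz bound.

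Next we use Rademacher's theorem in local $C^{1,1}$ graph charts. Since $\nu$ is Lipschitz on the Lipschitz manifold $\partial K^\epsilon_a$, it is tangentially differentiable at $\mathcal H^{n-1}$-a.e.\ point. At such a point the Weingarten map $\nabla^{\partial K^\epsilon_a}\nu(x)$ maps $T_x(\partial K^\epsilon_a)$ into itself, because $|\nu|=1$. Its operator norm is at most $1/\epsilon$ by the Lipschitz bound, which gives the upper bound $\langle\nabla^{\partial K^\epsilon_a}\nu\,\xi,\xi\rangle\leq|\xi|^2/\epsilon$.

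The remaining point, and the one needing the most care, is the symmetry and nonnegativity of the Weingarten map. Symmetry follows because locally $\partial K^\epsilon_a$ is the graph of a $C^{1,1}$ function $g$ over the tangent plane, and $\nu$ is an explicit smooth function of $\nabla g$. At points where $g$ is twice differentiable (a.e.\ by Rademacher applied to $\nabla g$), the Weingarten map is $\pm D^2g(x')$, which is symmetric. Convexity of $K^\epsilon_a$ makes $g$ concave when $K^\epsilon_a$ lies below the graph. With the outward normal $\nu$ this gives $D^2g\leq0$ and hence $\nabla^{\partial K^\epsilon_a}\nu=-D^2g\geq0$. The main obstacle is keeping the sign convention and the identification of $\nabla^{\partial K}\nu$ with $-D^2g$ consistent at a.e.\ points; once that is fixed, the quadratic-form bounds follow immediately.
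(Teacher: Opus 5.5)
Your proposal is correct and follows essentially the paper's route: Lemma~\ref{l1} together with the supporting halfspace gives the two-sided $\epsilon$-sphere condition, and Theorem~1 and Corollary~2 of \cite{LP20}, after rescaling, give a $C^{1,1}$ boundary with $1/\epsilon$-Lipschitz normal, from which a.e.\ differentiability and convexity yield the two quadratic-form bounds. There is one small slip in your optional direct check: the inequality $|x'-x|^2\geq2\epsilon(x'-x)\cdot\nu$ bounds $(x'-x)\cdot\nu$ from the same side as your interior-ball inequality, so adding them cannot produce your display; instead add the interior-ball inequality to its swap to get $\epsilon|\nu-\nu'|^2\leq(x-x')\cdot(\nu-\nu')$ and apply Cauchy--Schwarz, or use that the interior ball at $x'$ and the exterior ball at $x$ are disjoint, i.e.\ $|x'-x-\epsilon(\nu+\nu')|\geq2\epsilon$, and add the swapped version.
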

    
    Next, we show that the Weingarten operator of $\partial K_a^\epsilon$ is controlled by $f_\epsilon$ at the contact points of $u^\epsilon$ and $\Gamma^\epsilon$.
    \begin{lem}\label{l3}
        For $\mathcal{L}^1$-almost every $a>0$, we have
        \[
        0\leq |\nabla \Gamma^{\epsilon}|\det(\nabla^{\partial K^{\epsilon}_{a}}\nu)\leq  \left(\frac{f_{\epsilon}^-}{\lambda(n-1)}\right)^{n-1}\quad\mathcal{H}^{n-1}\text{-a.e. on }\{\Gamma^{\epsilon}=u^\epsilon\}\cap \partial K_a^\epsilon.
        \]
    \end{lem}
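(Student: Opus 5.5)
The plan is to build, at a contact point, an explicit $C^2$ upper test function for $u^\epsilon$ out of the $C^{1,1}$ geometry of $\partial K_a^\epsilon$ and the growth of $\Gamma^\epsilon$ in the normal direction. The viscosity inequality of Proposition~\ref{p1}(iv), combined with ellipticity, will then bound the principal curvatures.

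\textbf{Setup and choice of points.} Since $\Gamma^\epsilon$ is Lipschitz (Propositions~\ref{p1}(ii) and \ref{prop-qc}(ii)), Rademacher's theorem and the coarea formula show that for $\mathcal L^1$-a.e.\ $a>0$ the following holds for $\mathcal H^{n-1}$-a.e.\ $x\in\partial K_a^\epsilon$: $\Gamma^\epsilon$ is differentiable at $x$, and $\nu$ is differentiable at $x$ along $\partial K_a^\epsilon$ (Corollary~\ref{cor1}). Fix such $a$ and $x$ with $\Gamma^\epsilon(x)=u^\epsilon(x)=a$. If $\nabla\Gamma^\epsilon(x)=0$, or if $\det\nabla^{\partial K_a^\epsilon}\nu(x)=0$, the claimed inequality is trivial, because $0\le\det$ by Corollary~\ref{cor1}. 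So assume $\nabla\Gamma^\epsilon(x)=-s\nu(x)$ with $s>0$; the direction is forced by Lemma~\ref{l1}, since $K_a^\epsilon$ lies on the side $(y-x)\cdot\nu\le0$. Assume also that the principal curvatures $\kappa_1,\dots,\kappa_{n-1}$ at $x$ are all positive.

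\textbf{Test function.} Rotate coordinates so that $x=0$ and $\nu=e_n$. Near $0$, $\partial K_a^\epsilon$ is the graph $y_n=g(y')$ with $g\in C^{1,1}$, twice differentiable at $0$, $g(0)=0$, $\nabla g(0)=0$, $D^2g(0)=-\operatorname{diag}(\kappa_i)$. Since $u^\epsilon\le\Gamma^\epsilon$ and $\{\Gamma^\epsilon\ge a\}=K_a^\epsilon$, we know $u^\epsilon<a$ wherever $y_n>g(y')$. To capture the gradient we use $\Gamma^\epsilon$: by quasiconcavity, $\Gamma^\epsilon(y)\ge a$ forces $y\in K_a^\epsilon$, and differentiability gives $\Gamma^\epsilon(y)\le a - s\,(y_n-g(y'))+o(|y|)$ for $y$ above the graph.

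For $\delta>0$ small, consider
\[
\phi(y)=a-(s-\delta)\bigl(y_n-\psi(y')\bigr)+C|y|^2,\qquad \psi(y')=-\tfrac12\sum_i(\kappa_i-\delta)y_i^2 .
\]
Then $\phi(y)$ has gradient $-(s-\delta)e_n$ at $0$, and its projected Hessian equals $-(s-\delta)\operatorname{diag}(\kappa_i-\delta)$ plus the correction from $C|y|^2$. The main obstacle is that touching from above holds only to first order in the normal direction, not to second order. I would handle this the way \cite{CDDM} does: first reduce, via the $1/\epsilon$-semiconvexity of $u^\epsilon$ (Proposition~\ref{p1}(i)) and Alexandrov's theorem, to points where $u^\epsilon$ is itself twice differentiable. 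Then compare $u^\epsilon$ directly with its second-order expansion. Since $u^\epsilon\le a$ on $K_a^\epsilon$'s complement-side constraint, the level set $\{u^\epsilon\ge a\}\subseteq K_a^\epsilon$ is tangent to $\partial K_a^\epsilon$ at $x$, and the level set curvature of $u^\epsilon$ at $x$ is at least $\kappa_i$ in the sense of quadratic forms. This yields $\widetilde D^2u^\epsilon(x)\le -|\nabla u^\epsilon(x)|\,\nabla^{\partial K}\nu(x)$ on the tangent space, with $\nabla u^\epsilon(x)=\nabla\Gamma^\epsilon(x)$. This is plausible because $u^\epsilon\le\Gamma^\epsilon$, with equality at $x$ and both differentiable there. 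Passing from a pointwise twice-differentiable point to the viscosity inequality is standard for semiconvex functions: perturb by $\tfrac{\eta}{2}|y-x|^2$ to obtain a $C^2$ test function.

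\textbf{Conclusion.} Proposition~\ref{p1}(iv), ellipticity and $F_\epsilon(x,0)=0$ give
\[
f_\epsilon(x)\le s^{(2-n)/(n-1)}\mathcal P^+\bigl(-s\,\nabla^{\partial K}\nu\bigr)=-\lambda s^{1/(n-1)}\sum_i\kappa_i,
\]
where the last equality holds because the matrix is negative semidefinite. Hence $\lambda s^{1/(n-1)}\sum\kappa_i\le f_\epsilon^-$. By AM--GM,
\[
s^{1/(n-1)}(\det)^{1/(n-1)}\le \frac{s^{1/(n-1)}\sum\kappa_i}{n-1}\le \frac{f_\epsilon^-}{\lambda(n-1)},
\]
and raising to the power $n-1$ yields the bound. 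The delicate step is the one flagged above: obtaining the second-order comparison of level sets at a point where $u^\epsilon$ and $\Gamma^\epsilon$ touch while $\Gamma^\epsilon$ is only Lipschitz. Restricting to Alexandrov points of $u^\epsilon$ that also have full $\mathcal H^{n-1}$-measure on a.e.\ level, again via coarea, is what I expect to require the most care.
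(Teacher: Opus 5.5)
Your eventual route is the paper's. You are right to drop the test function built from $\Gamma^\epsilon$: the bounds it gives on $u^\epsilon$ carry $o(|y-x|)$ errors that no quadratic term can absorb. From there you do what the paper does:
\begin{itemize}
\item restrict to points of $\{\Gamma^\epsilon=u^\epsilon\}\cap\partial K_a^\epsilon$ where $u^\epsilon$ is twice differentiable and both $\Gamma^\epsilon$ and $\nu$ are differentiable;
\item derive $\widetilde D^2u^\epsilon(x)\le-|p|\,\nabla^{\partial K_a^\epsilon}\nu(x)$ on $T=p^\perp$, with $p=\nabla u^\epsilon(x)=\nabla\Gamma^\epsilon(x)$;
\item test with the second-order Taylor polynomial plus a vanishing multiple of $|y-x|^2$;
\item finish with the Pucci bound and AM--GM.
\end{itemize}
In the last step you use the monotonicity of $\mathcal P^+$ directly, where the paper bounds $\operatorname{tr}X$ and then takes traces in $A\le-|p|^{-1}X$. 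The two are equivalent.

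The genuine gap is the central inequality itself. You only call it ``plausible'' and motivate it by a ``level-set curvature of $u^\epsilon$.'' That notion is not defined at a point where $u^\epsilon$ is merely twice differentiable, so as written the heart of the lemma is unproved. It closes in a few lines and needs no second-order information on $\Gamma^\epsilon$, so your worry that $\Gamma^\epsilon$ is only Lipschitz is misplaced.

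Write $\partial K_a^\epsilon$ near $x$ as $\{x+\xi+g(\xi)e:\xi\in T\}$ with $e=p/|p|=-\nu(x)$, $g(0)=0$, $Dg(0)=0$. Differentiability of the Gauss map at $x$ makes $g$ twice differentiable at $0$, with $D^2g(0)=A:=\nabla^{\partial K_a^\epsilon}\nu(x)$. For a unit vector $\xi\in T$, put $\gamma(t)=x+t\xi+g(t\xi)e$. Since $\Gamma^\epsilon\equiv a$ on $\partial K_a^\epsilon$ and $u^\epsilon\le\Gamma^\epsilon$, we have $u^\epsilon(\gamma(t))\le a=u^\epsilon(x)$. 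Because $p\perp\xi$, the first-order term of the Taylor expansion contributes only $|p|g(t\xi)$, and the terms involving $g$ in the quadratic part are $O(t^3)$. Hence
\[
0\ge u^\epsilon(\gamma(t))-u^\epsilon(x)=\frac{t^2}{2}\Bigl(|p|\langle A\xi,\xi\rangle+\langle \widetilde D^2u^\epsilon(x)\xi,\xi\rangle\Bigr)+o(t^2).
\]
Together with $A\ge0$ from Corollary~\ref{cor1}, this is exactly $0\le A\le-|p|^{-1}\widetilde D^2u^\epsilon(x)$ on $T$. Here $\Gamma^\epsilon$ enters only in two ways: it is constant on $\partial K_a^\epsilon$, and $\nabla\Gamma^\epsilon(x)=\nabla u^\epsilon(x)$ identifies $|p|$ with $|\nabla\Gamma^\epsilon(x)|$.

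The selection of good points, which you also flag as delicate, is a single application of the coarea formula to the Lipschitz function $\Gamma^\epsilon$. If $N$ is the $\mathcal L^n$-null set where $u^\epsilon$ is not twice differentiable or $\Gamma^\epsilon$ is not differentiable, then $\int_0^\infty\mathcal H^{n-1}(N\cap\{\Gamma^\epsilon=a\})\,da=\int_N|\nabla\Gamma^\epsilon|\,dx=0$. Differentiability of $\nu$ holds $\mathcal H^{n-1}$-a.e. on every $\partial K_a^\epsilon$ by Corollary~\ref{cor1}.
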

    \begin{proof}
        Let $N\subseteq\R^n$ be the set where $u^\epsilon$ is not twice differentiable or $\Gamma^\epsilon$ is not differentiable. By Aleksandrov's theorem and Rademacher's theorem, $\mathcal{L}^n(N)=0$. Moreover, Corollary~\ref{cor1} implies that the Gauss map $\nu$ is Lipschitz on $\partial K_a^\epsilon$ and hence differentiable $\mathcal H^{n-1}$-almost everywhere. Since $\Gamma^\epsilon$ is continuous, $\partial K_a^\epsilon\subseteq\{\Gamma^\epsilon=a\}$. Therefore, by the coarea formula~\cite{MR257325}, for $\mathcal{L}^1$-almost every $a>0$,
        \begin{equation}\label{H}
            \mathcal{H}^{n-1}(\partial K^\epsilon_a\cap N)=0.
        \end{equation}

        Fix such an $a$, and let $x\in\partial K_a^\epsilon\cap\{\Gamma^\epsilon=u^\epsilon\}$ be a point at which $u^\epsilon$ is twice differentiable, $\Gamma^\epsilon$ is differentiable, and the Gauss map $\nu$ is differentiable. Since $u^\epsilon\leq\Gamma^\epsilon$ and equality holds at $x$,
        \[
        p:=\nabla u^\epsilon(x)=\nabla\Gamma^\epsilon(x).
        \]
        If $p=0$, the assertion is immediate. We therefore assume $p\neq0$ and set
        \[
        e:=\frac{p}{|p|}=-\nu(x),
        \qquad T:=e^\perp.
        \]
        Set
        \[
        A:=\nabla^{\partial K_a^\epsilon}\nu(x),
        \qquad
        H:=D^2u^\epsilon(x),
        \qquad
        X:=\widetilde D^2u^\epsilon(x)=P_pHP_p.
        \]
        We regard $A$ as an operator on $T$. Since $P_p$ is the orthogonal projection onto $T$, the projected matrix $X$ vanishes on $\operatorname{span}\{e\}$ and has no mixed components between $T$ and $\operatorname{span}\{e\}$. By Corollary~\ref{cor1}, $A\geq0$.

        Since $\partial K_a^\epsilon$ is a $C^{1,1}$ hypersurface with tangent space $T$ at $x$, the $C^1$ implicit function theorem gives, in a neighborhood of $0$ in $T$,
        \[
        \partial K_a^\epsilon
        =\{x+\xi+g(\xi)e:\xi\in T\},
        \qquad
        K_a^\epsilon
        =\{x+\xi+te:t\geq g(\xi)\},
        \]
        for some $g\in C^{1,1}$ satisfying $g(0)=0$ and $Dg(0)=0$. With the outward normal oriented as above,
        \[
        \nu(x+\xi+g(\xi)e)
        =\frac{Dg(\xi)-e}{\sqrt{1+|Dg(\xi)|^2}},
        \]
        where $Dg(\xi)\in T$ is identified with its Euclidean gradient. Since the Gauss map is differentiable at $x$, it follows that $Dg$ is differentiable at $0$; hence $g$ is twice differentiable there and
        \[
        D^2g(0)=A.
        \]

        Let $\xi\in T$ be a unit vector and set
        \[
        \gamma(t):=x+t\xi+g(t\xi)e\in\partial K_a^\epsilon.
        \]
        Since $\Gamma^\epsilon(\gamma(t))=a=u^\epsilon(x)$ and $u^\epsilon\leq\Gamma^\epsilon$, we have $u^\epsilon(\gamma(t))\leq u^\epsilon(x)$. Using
        \[
        g(t\xi)=\frac{t^2}{2}\langle A\xi,\xi\rangle+o(t^2)
        \]
        and the second-order expansion of $u^\epsilon$ at $x$, we obtain
        \[
        |p|\langle A\xi,\xi\rangle+\langle X\xi,\xi\rangle\leq0.
        \]
        Thus, identifying $X$ with its restriction to $T$ when comparing it with $A$,
        \begin{equation}\label{AX}
        0\leq A\leq-|p|^{-1}X.
        \end{equation}
        Since $X$ vanishes on $\operatorname{span}\{e\}$, this also shows that $X\leq0$ as a matrix on $\R^n$, so
        \[
        \mathcal P^+(X)=\lambda\operatorname{tr}X.
        \]
        For $\rho>0$, define
        \[
        \phi_\rho(y):=u^\epsilon(x)+p\cdot(y-x)
        +\frac12\langle H(y-x),y-x\rangle+\rho|y-x|^2.
        \]
        Since $u^\epsilon$ is twice differentiable at $x$, $\phi_\rho$ touches $u^\epsilon$ from above at $x$ in a sufficiently small neighborhood. Moreover,
        \[
        \widetilde D^2\phi_\rho(x)=X+2\rho P_p.
        \]
        Hence Proposition~\ref{p1}(iv) gives
        \[
        |p|^{\frac{2-n}{n-1}}F_\epsilon(x,X+2\rho P_p)\geq f_\epsilon(x).
        \]
        By the uniform ellipticity of $F_\epsilon(x,\cdot)$, it is continuous in the matrix variable. Letting $\rho\downarrow0$, we obtain
        \[
        |p|^{\frac{2-n}{n-1}}F_\epsilon(x,X)\geq f_\epsilon(x).
        \]
        Since $F_\epsilon(x,0)=0$ and $X\leq0$,
        \[
        \lambda |p|^{\frac{2-n}{n-1}}\operatorname{tr}X
        =|p|^{\frac{2-n}{n-1}}\mathcal P^+(X)
        \geq |p|^{\frac{2-n}{n-1}}F_\epsilon(x,X)
        \geq f_\epsilon(x).
        \]
        Since the left-hand side is nonpositive, $f_\epsilon(x)\leq0$, and hence
        \[
        -\operatorname{tr}X
        \leq
        \frac{f_\epsilon^-(x)}{\lambda}|p|^{\frac{n-2}{n-1}}.
        \]
        Taking traces in \eqref{AX}, we find
        \[
        \operatorname{tr}A
        \leq
        \frac{f_\epsilon^-(x)}{\lambda}|p|^{-\frac1{n-1}}.
        \]
        Since $A\geq0$, the arithmetic--geometric mean inequality yields
        \[
        \det A
        \leq
        \left(\frac{\operatorname{tr}A}{n-1}\right)^{n-1}
        \leq
        |p|^{-1}\left(\frac{f_\epsilon^-(x)}{\lambda(n-1)}\right)^{n-1}.
        \]
        Finally, $|p|=|\nabla\Gamma^\epsilon(x)|$, which proves the desired estimate.
    \end{proof}
    We next record two facts concerning contact sets and their stability.

    \begin{lem}\label{lem4}
    Let $v\in C(\R^n)$ be nonnegative and compactly supported, and let $\Gamma_v$ be its quasiconcave envelope. Then
    \[
    \{\Gamma_v=v\}=\Theta(v).
    \]
    \end{lem}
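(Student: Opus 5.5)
We prove the two inclusions separately, using Proposition~\ref{prop-qc}(i) to describe the superlevel sets of $\Gamma_v$ as convex hulls of superlevel sets of $v$. Note first that $v\le\Gamma_v$ everywhere. Also, $v\geq0$ implies $\Gamma_v\geq0$, and the constant $\sup v$ is a quasiconcave majorant, so $\Gamma_v\leq\sup v$.

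\emph{Inclusion $\Theta(v)\subseteq\{\Gamma_v=v\}$.} Let $x\in\Theta(v)$ with normal $\nu$, and set $a:=v(x)$.

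If $a=0$, then $v\le0$ on the open half-space $\{(y-x)\cdot\nu>0\}$, so $\{v>0\}$ lies in the closed half-space $\{(y-x)\cdot\nu\le0\}$. By \eqref{qc-levels-open}, $\{\Gamma_v>0\}=\operatorname{Conv}(\{v>0\})$ is contained in that half-space as well. For $t>0$ the point $x+t\nu$ lies outside it, so $\Gamma_v(x+t\nu)=0$. Continuity of $\Gamma_v$ (Proposition~\ref{prop-qc}(ii)) then gives $\Gamma_v(x)=0=v(x)$.

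If $a>0$, take any $b>a$. Then $\{v\ge b\}\subseteq\{(y-x)\cdot\nu\le0\}$, and this set misses $x$ since $v(x)=a<b$. I want to show $x\notin\{\Gamma_v\ge b\}=\operatorname{Conv}\{v\ge b\}$.

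A point of $\operatorname{Conv}\{v\ge b\}$ equal to $x$ would, by the same Carathéodory/half-space argument used in Case~2 of Lemma~\ref{l1}, be a convex combination of points of $\{v\ge b\}\cap H_x$. Here $H_x$ is the hyperplane through $x$ orthogonal to $\nu$. So I must exclude $x$ lying in $\operatorname{Conv}(\{v\ge b\}\cap H_x)$, and the half-space condition alone does not do this. To fix it, use continuity of $v$: the compact set $\{v\ge b\}$ has positive distance from $x$. Tilt $\nu$ slightly, or translate the hyperplane slightly toward the negative side. Concretely, consider $x_s:=x+s\nu$ for small $s>0$. Then $x_s\in\Theta$-type position, since $\{(y-x_s)\cdot\nu>0\}\subseteq\{(y-x)\cdot\nu>0\}$, and $\Gamma_v(x_s)\le a$ because $\{v>a\}$ lies in $\{(y-x)\cdot\nu\le0\}$, which strictly separates it from $x_s$.

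Rather than working with $x$ itself, it is cleaner to argue as follows. $\{v>a\}$ is contained in the closed half-space $\{(y-x)\cdot\nu\le 0\}$, and by \eqref{qc-levels-open} so is $\{\Gamma_v>a\}$. Hence $\Gamma_v(x+s\nu)\le a$ for all $s>0$. Letting $s\downarrow0$ and using continuity of $\Gamma_v$ gives $\Gamma_v(x)\le a=v(x)$. Combined with $v\le\Gamma_v$, this yields equality. The argument covers $a=0$ too, so the case split above is unnecessary.

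\emph{Inclusion $\{\Gamma_v=v\}\subseteq\Theta(v)$.} Let $\Gamma_v(x)=v(x)=:a$.

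If $a=\sup v$, any $\nu$ works.

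If $a<\sup v$, the set $C:=\{\Gamma_v>a\}=\operatorname{Conv}\{v>a\}$ is convex, open and nonempty, and $x\notin C$. By Hahn--Banach there is $\nu\in\partial B_1$ with $C\subseteq\{(y-x)\cdot\nu<0\}$; openness of $C$ upgrades weak separation to strict. Then for every $y$ with $(y-x)\cdot\nu>0$ we have $y\notin C$, hence $v(y)\le\Gamma_v(y)\le a=v(x)$. So $x\in\Theta(v)$.

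\emph{Main obstacle.} The only delicate point is the use of \eqref{qc-levels-open} with $a\geq0$ together with continuity of $\Gamma_v$. The level $a=0$ and boundary points of convex hulls are handled by the limiting argument $s\downarrow0$ rather than by analyzing $\operatorname{Conv}$ on the hyperplane directly. Continuity of $\Gamma_v$ follows from Proposition~\ref{prop-qc}(ii), because $v$ is uniformly continuous: it is continuous with compact support.
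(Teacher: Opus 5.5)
Your proof is correct and follows essentially the paper's argument. One inclusion strictly separates the open convex set $\{\Gamma_v>a\}=\operatorname{Conv}\{v>a\}$ from the contact point. The other confines $\{v>a\}$ to a half-space and applies \eqref{qc-levels-open}, including at $a=0$. The only difference is in that second step: the paper uses continuity of $v$ to place $\{v>a\}$ in the open half-space $\{(y-x)\cdot\nu<0\}$, which gives $\Gamma_v(x)\le a$ directly without Proposition~\ref{prop-qc}(ii), whereas you approach $x$ along $x+s\nu$ and use continuity of $\Gamma_v$; the exploratory Carath\'eodory/tilting paragraph can simply be deleted.
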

    \begin{proof}
        Let $z\in\{\Gamma_v=v\}$ and set $a=\Gamma_v(z)=v(z)$. If $\{\Gamma_v>a\}=\varnothing$, then $a=\max_{\R^n}\Gamma_v$ and hence $v(y)\leq a=v(z)$ for every $y\in\R^n$. Thus any $\nu\in\partial B_1$ shows that $z\in\Theta(v)$. We may therefore assume that $\{\Gamma_v>a\}\neq\varnothing$. The set $\{\Gamma_v>a\}$ is open and convex and does not contain $z$. Hence a supporting-hyperplane argument gives a vector $\nu\in\partial B_1$ such that
        \[
        \{v>a\}\subseteq\{\Gamma_v>a\}
        \subseteq\{y\in\R^n:(y-z)\cdot\nu<0\}.
        \]
        Therefore
        \[
        v(y)\leq v(z)\quad\text{whenever }(y-z)\cdot\nu\geq0,
        \]
        and thus $z\in\Theta(v)$.

        Conversely, suppose that $z\in\Theta(v)$ and choose $\nu\in\partial B_1$ such that
        \[
        v(y)\leq v(z)\quad\text{whenever }(y-z)\cdot\nu>0.
        \]
        By continuity, the same inequality holds when $(y-z)\cdot\nu\geq0$. Set $a=v(z)$. Then
        \[
        \{v>a\}\subseteq\{y\in\R^n:(y-z)\cdot\nu<0\}.
        \]
        Proposition~\ref{prop-qc}(i) gives
        \[
        \{\Gamma_v>a\}=\operatorname{Conv}(\{v>a\}),
        \]
        also when $a=0$. Hence
        \[
        \{\Gamma_v>a\}\subseteq\{y\in\R^n:(y-z)\cdot\nu<0\},
        \]
        so $\Gamma_v(z)\leq a$. Since $\Gamma_v\geq v$, we conclude that $\Gamma_v(z)=a=v(z)$.
    \end{proof}

    For a sequence of sets $E_j\subseteq\R^n$, we use the Kuratowski upper limit
    \[
    \limsup_{j\to\infty}E_j
    :=\{x\in\R^n:\text{there exist }j_k\to\infty\text{ and }x_{j_k}\in E_{j_k}\text{ with }x_{j_k}\to x\}.
    \]

    \begin{lem}\label{lem3}
        Let $u\in \mathrm{USC}(\R^n)$ and $u_j\in C(\R^n)$ be nonnegative. Assume that, for every $x\in\R^n$,
        \begin{enumerate}
            \item[(I)] there exist $x_j\to x$ such that $u_j(x_j)\to u(x)$;
            \item[(II)] for every sequence $x_j\to x$,
            \[
            \limsup_{j\to\infty}u_j(x_j)\leq u(x).
            \]
        \end{enumerate}
        Then
        \[
        \limsup_{j\to\infty}\Theta(u_j)
        \subseteq \Theta(u).
        \]
    \end{lem}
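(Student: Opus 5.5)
Fix $x\in\limsup_{j}\Theta(u_j)$. By definition there are indices $j_k\to\infty$ and points $x_k:=x_{j_k}\in\Theta(u_{j_k})$ with $x_k\to x$. For each $k$ choose a unit vector $\nu_k$ witnessing the membership, so that $u_{j_k}(y)\leq u_{j_k}(x_k)$ whenever $(y-x_k)\cdot\nu_k>0$. Since $\partial B_1$ is compact, we pass to a further subsequence (not relabeled) with $\nu_k\to\nu\in\partial B_1$. We will show that this $\nu$ witnesses $x\in\Theta(u)$, i.e.\ $u(y)\leq u(x)$ for every $y$ with $(y-x)\cdot\nu>0$.

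Fix such a $y$ and set $\delta:=(y-x)\cdot\nu>0$. By hypothesis (I), applied at the point $y$ along the full sequence, there exist $y_j\to y$ with $u_j(y_j)\to u(y)$. In particular $y_{j_k}\to y$, and
\[
(y_{j_k}-x_k)\cdot\nu_k\longrightarrow(y-x)\cdot\nu=\delta>0,
\]
so $(y_{j_k}-x_k)\cdot\nu_k>0$ for all large $k$. For those $k$ the choice of $\nu_k$ gives $u_{j_k}(y_{j_k})\leq u_{j_k}(x_k)$.

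We now let $k\to\infty$. The left-hand side tends to $u(y)$. For the right-hand side we apply (II) to the sequence $x_k\to x$. Formally (II) is stated for sequences indexed by all $j$, so we first extend $(x_k)$ to a full sequence converging to $x$, for instance by setting $x_j':=x_k$ when $j=j_k$ and $x_j':=x$ otherwise. Then (II) yields $\limsup_k u_{j_k}(x_k)\leq u(x)$. Combining the two limits,
\[
u(y)=\lim_{k}u_{j_k}(y_{j_k})\leq\limsup_k u_{j_k}(x_k)\leq u(x).
\]
Since $y$ was an arbitrary point of the open half-space $\{(y-x)\cdot\nu>0\}$, this shows $x\in\Theta(u)$.

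The only delicate point is the strict inequality in the definition of $\Theta$. Because the half-space is open, the strict inequality $(y-x)\cdot\nu>0$ at the limit carries over to the approximating points $y_{j_k}$, $x_k$, $\nu_k$ for large $k$. With a closed half-space this stability would fail, which is why the paper's formulation of $\Theta$ uses the open one. Beyond this, the argument needs only compactness of the sphere and the two half-relaxed limit conditions (I) and (II).
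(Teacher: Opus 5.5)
Your proof is correct and follows the paper's argument: take a convergent subsequence of the witnessing normals $\nu_k\to\nu$, approximate $y$ via (I), use openness of the half-space to get $(y_{j_k}-x_k)\cdot\nu_k>0$ for large $k$, and pass to the limit using (II). Extending $(x_k)$ to a full sequence before applying (II) makes explicit a step that the paper's ``passing to a subsequence'' leaves implicit.
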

    \begin{proof}
        Let $z\in\limsup_{j\to\infty}\Theta(u_j)$. Passing to a subsequence, there exist $z_j\in\Theta(u_j)$ such that $z_j\to z$.
        For each $j$, choose $\nu_j\in\partial B_1$ from the definition of $\Theta(u_j)$. Passing to a further subsequence, we may assume $\nu_j\to\nu\in\partial B_1$. By (II),
        \[
        \limsup_{j\to\infty}u_j(z_j)\leq u(z).
        \]
        Fix $y\in\R^n$ such that $(y-z)\cdot\nu>0$. By (I), there exist $y_j\to y$ such that $u_j(y_j)\to u(y)$. For all sufficiently large $j$,
        $(y_j-z_j)\cdot\nu_j>0$, and hence
        \[
        u_j(y_j)\leq u_j(z_j).
        \]
        Letting $j\to\infty$ gives $u(y)\leq u(z)$. Thus $z\in\Theta(u)$.
    \end{proof}

    We can now prove the global estimate for the reduced subsolution.

    \begin{lem}\label{lem5}
        Let $u\in\mathrm{USC}(\R^n)$ be nonnegative with $\operatorname{supp}u\subseteq\overline\Omega$. Assume that $F\in C(\R^n\times\mathcal S^n)$ satisfies $F(x,0)=0$ for every $x\in\R^n$ and is uniformly elliptic with constants $\lambda,\Lambda$, that $f\in C(\R^n)$, and that $u$ is a viscosity subsolution of \eqref{F} in $\{u>0\}$. Then
        \[
        \sup_{\R^n}u
        \leq
        \frac{1}{\lambda^{n-1}(n-1)^{n-1}\mathcal H^{n-1}(\partial B_1)}
        \int_{\Theta(u)\cap\{u>0\}}(f^-)^{n-1}\,dx.
        \]
    \end{lem}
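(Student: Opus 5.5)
We first prove the estimate for the regularization $u^\epsilon$ and then let $\epsilon\to0$. Fix $\epsilon\in(0,1)$ and let $\Gamma^\epsilon$ be the quasiconcave envelope of $u^\epsilon$. By Proposition~\ref{p1}(ii), $u^\epsilon$ is Lipschitz, and by Proposition~\ref{p1}(iii) it is compactly supported. Proposition~\ref{prop-qc}(ii) then shows that $\Gamma^\epsilon$ is Lipschitz, and $\sup\Gamma^\epsilon=\sup u^\epsilon=:M_\epsilon$. For every $a\in(0,M_\epsilon)$ the set $K_a^\epsilon$ is a compact convex body: it has nonempty interior because it contains a ball by Lemma~\ref{l1}, and $\partial K_a^\epsilon$ is $C^{1,1}$ by Corollary~\ref{cor1}. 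For such a body the Gauss map $\nu\colon\partial K_a^\epsilon\to\partial B_1$ is surjective and Lipschitz. The area formula therefore gives
\[
\mathcal H^{n-1}(\partial B_1)\le\int_{\partial K_a^\epsilon}\det\nabla^{\partial K_a^\epsilon}\nu\,d\mathcal H^{n-1}.
\]
Integrating in $a$ over $(0,M_\epsilon)$ and applying the coarea formula to the Lipschitz function $\Gamma^\epsilon$ yields
\[
M_\epsilon\,\mathcal H^{n-1}(\partial B_1)\le\int_{\{\Gamma^\epsilon>0\}}|\nabla\Gamma^\epsilon|\det\nabla^{\partial K^\epsilon_{\Gamma^\epsilon(x)}}\nu\,dx .
\]
Here we use $\partial K_a^\epsilon\subseteq\{\Gamma^\epsilon=a\}$. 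Conversely, for a.e.\ $a$ the level set $\{\Gamma^\epsilon=a\}$ differs from $\partial K_a^\epsilon$ only by the interior of a plateau, and on that part $\nabla\Gamma^\epsilon=0$, so it contributes nothing.

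Next we show that the integrand vanishes a.e.\ on $\{\Gamma^\epsilon>u^\epsilon\}$; this is the first half of \eqref{e32}. We argue as in Case~2 of Lemma~\ref{l1}. At $x\in\partial K_a^\epsilon$ with $u^\epsilon(x)<a$, Carath\'eodory writes $x$ as a convex combination of at least two distinct points of $E_a\cap H_x$. Hence the segment between two of them, which contains $x$ in its relative interior, lies in $\partial K_a^\epsilon\cap H_x$. Along that direction the second fundamental form vanishes wherever $\nu$ is differentiable, so $\det\nabla^{\partial K_a^\epsilon}\nu(x)=0$. If $x$ were an endpoint we would need more care, but $x$ lies in the relative interior of the simplex spanned by these points, so this case does not arise. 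On $\{\Gamma^\epsilon=u^\epsilon\}$ we apply Lemma~\ref{l3}. Since $\{\Gamma^\epsilon=u^\epsilon>0\}=\Theta(u^\epsilon)\cap\{u^\epsilon>0\}$ by Lemma~\ref{lem4}, we obtain
\[
\sup u^\epsilon\le\frac{1}{\lambda^{n-1}(n-1)^{n-1}\mathcal H^{n-1}(\partial B_1)}\int_{\Theta(u^\epsilon)\cap\{u^\epsilon>0\}}(f_\epsilon^-)^{n-1}\,dx .
\]

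Finally we let $\epsilon\to0$. The left side tends to at least $\sup u$ by Proposition~\ref{p1}(vi). On the right, $f_\epsilon\to f$ locally uniformly, so $f_\epsilon^-\to f^-$ uniformly on the bounded set $\Omega_1$, which contains all the supports. The main obstacle is controlling the integration domains. We want
\[
\limsup_{\epsilon\to0}\int_{S_\epsilon}(f^-)^{n-1}\,dx\le\int_{\Theta(u)\cap\{u>0\}}(f^-)^{n-1}\,dx,\qquad S_\epsilon=\Theta(u^\epsilon)\cap\{u^\epsilon>0\}.
\]
We would use the reverse Fatou lemma, $\limsup\int\mathbf 1_{S_\epsilon}g\le\int\limsup\mathbf 1_{S_\epsilon}g$, along a sequence $\epsilon_j\to0$, and note that the pointwise $\limsup$ of indicators is dominated by the indicator of the Kuratowski upper limit. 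Proposition~\ref{p1}(v),(vi) supply hypotheses (I) and (II) of Lemma~\ref{lem3}, so $\limsup\Theta(u^{\epsilon_j})\subseteq\Theta(u)$. The remaining issue is the factor $\{u>0\}$. A limit point may satisfy $u(z)=0$, and then the set $\Theta(u)\cap\{u=0\}$ is not obviously null. To handle this we would run the whole argument on the superlevel part $\{u^\epsilon>\delta\}$, that is, integrate only over $a\in(\delta,M_\epsilon)$. This gives the bound for $\sup u^\epsilon-\delta$ with integrand supported in $\{u^\epsilon\ge\delta\}$. By (II), Kuratowski limits of $\{u^{\epsilon_j}\ge\delta\}$ lie in $\{u\ge\delta\}\subseteq\{u>0\}$. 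Letting $\epsilon\to0$ and then $\delta\to0$ concludes.
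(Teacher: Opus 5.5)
Your proof is correct and is essentially the paper's argument. You regularize by $u^\epsilon$, combine the area formula for the Gauss map of $\partial K_a^\epsilon$ with the coarea formula for $\Gamma^\epsilon$, bound the contact part by Lemma~\ref{l3}, and pass to the limit via Lemmas~\ref{lem4} and~\ref{lem3}, the reverse Fatou lemma, and a truncation at a positive level $\delta$ (the paper's $\eta$).

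The only local difference is in how you discard the non-contact part. You apply the area formula on all of $\partial K_a^\epsilon$ and show $\det\nabla^{\partial K_a^\epsilon}\nu=0$ at differentiability points there, because $\nu$ is constant along a segment through $x$ inside the Carath\'eodory simplex. That segment is a short one through $x$ parallel to some $x_i-x_j$, not in general the segment joining two vertices, so that sentence needs rewording. The paper sidesteps this step by observing that each $\nu_0\in\partial B_1$ is attained at a maximizer of $y\cdot\nu_0$ over $E_a$, hence already on $\partial K_a^\epsilon\cap\{u^\epsilon=\Gamma^\epsilon\}$, so the area formula can be restricted to the contact set from the outset.
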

    \begin{proof}
        Set $M=\sup_{\R^n}u$. If $M=0$, there is nothing to prove. By Proposition~\ref{p1}(iii), $u^\epsilon$ is compactly supported for every $\epsilon>0$, and all these supports lie in one fixed bounded set when $\epsilon$ is sufficiently small.

        We have $\sup_{\R^n}u^\epsilon=M$: the inequality $u^\epsilon\leq M$ is immediate from the definition, while equality follows by evaluating the supremum at a maximum point of $u$ with $y=z=0$.
        Fix $\eta\in(0,M)$. For $a\in(\eta,M)$, the normal map on $\partial K_a^\epsilon$ satisfies
        \[
        \partial B_1
        \subseteq
        \nu\bigl(\partial K_a^\epsilon\cap\{u^\epsilon=\Gamma^\epsilon\}\bigr).
        \]
        Indeed, fix $\nu_0\in\partial B_1$ and set $E_a:=\{u^\epsilon\geq a\}$. Since $E_a$ is compact and $K_a^\epsilon=\operatorname{Conv}(E_a)$, the support function satisfies
        \[
        \max_{K_a^\epsilon}x\cdot\nu_0=\max_{E_a}x\cdot\nu_0.
        \]
        Choose $x_0\in E_a$ attaining this maximum. Then $x_0\in\partial K_a^\epsilon$, and hence $\Gamma^\epsilon(x_0)=a$. Since $u^\epsilon(x_0)\geq a$ and $u^\epsilon\leq\Gamma^\epsilon$, we have $u^\epsilon(x_0)=\Gamma^\epsilon(x_0)=a$. The hyperplane orthogonal to $\nu_0$ supports $K_a^\epsilon$ at $x_0$; by uniqueness of the supporting normal from Lemma~\ref{l1}, $\nu(x_0)=\nu_0$.
        Therefore, by the area formula for the Lipschitz Gauss map (see Corollary~3.2.20 in \cite{MR257325}),
        \[
        \mathcal H^{n-1}(\partial B_1)
        \leq
        \int_{\partial K_a^\epsilon\cap\{u^\epsilon=\Gamma^\epsilon\}}
        \det(\nabla^{\partial K_a^\epsilon}\nu)\,d\mathcal H^{n-1}.
        \]

        Let $Z_\epsilon:=\{x\in\R^n:\nabla\Gamma^\epsilon(x)=0\}$. Since $\Gamma^\epsilon$ is Lipschitz, the coarea formula gives
        \begin{align*}
        \int_0^M \mathcal H^{n-1}(\partial K_a^\epsilon\cap Z_\epsilon)\,da
        &\leq \int_0^M \mathcal H^{n-1}(\{\Gamma^\epsilon=a\}\cap Z_\epsilon)\,da\\
        &=\int_{Z_\epsilon}|\nabla\Gamma^\epsilon|\,dx=0.
        \end{align*}
        Hence, for almost every $a\in(\eta,M)$, the set $\partial K_a^\epsilon\cap Z_\epsilon$ is $\mathcal H^{n-1}$-null. Combining the area formula with Lemma~\ref{l3} and then integrating in $a$ yields, by the coarea formula,
        \begin{equation}\label{eps-est}
        (M-\eta)\mathcal H^{n-1}(\partial B_1)
        \leq
        \int_{E_{\epsilon,\eta}}
        \left(\frac{f_\epsilon^-}{\lambda(n-1)}\right)^{n-1}dx,
        \end{equation}
        where
        \[
        E_{\epsilon,\eta}
        :=\{\Gamma^\epsilon=u^\epsilon\}\cap\{u^\epsilon>\eta\}.
        \]

        Let $\epsilon_j\downarrow0$ be a sequence along which the right-hand side of \eqref{eps-est} converges to its limsup. By Lemma~\ref{lem4},
        \[
        E_{\epsilon_j,\eta}
        \subseteq\Theta(u^{\epsilon_j}).
        \]
        Properties (v) and (vi) of Proposition~\ref{p1}, together with Lemma~\ref{lem3}, yield
        \[
        \limsup_{j\to\infty}E_{\epsilon_j,\eta}
        \subseteq\Theta(u)\cap\{u\geq\eta\}.
        \]
        Here the additional condition $u\geq\eta$ follows from property (v), since $u^{\epsilon_j}>\eta$ on $E_{\epsilon_j,\eta}$.

        By Proposition~\ref{p1}(iii), the sets $E_{\epsilon_j,\eta}$ lie in one fixed compact set $K_0\subset\R^n$. Enlarging $K_0$ if necessary, we may choose a compact set $K\subset\R^n$ such that $B_{c_{\epsilon_j}}(x)\subset K$ for every $x\in K_0$ and all sufficiently large $j$. Since $f$ is uniformly continuous on $K$ and $c_{\epsilon_j}\to0$, the definition of $f_{\epsilon_j}$ gives
        \[
        f_{\epsilon_j}\longrightarrow f
        \qquad\text{uniformly on }K_0.
        \]
        Hence the functions
        \[
        \mathbf 1_{E_{\epsilon_j,\eta}}(f_{\epsilon_j}^-)^{n-1}
        \]
        are uniformly bounded by an integrable function supported in $K_0$, and
        \[
        \limsup_{j\to\infty}
        \mathbf 1_{E_{\epsilon_j,\eta}}(x)(f_{\epsilon_j}^-(x))^{n-1}
        \leq
        \mathbf 1_{\limsup_{j\to\infty}E_{\epsilon_j,\eta}}(x)(f^-(x))^{n-1}.
        \]
        The reverse Fatou lemma and the preceding inclusion therefore yield
        \[
        \limsup_{j\to\infty}
        \int_{E_{\epsilon_j,\eta}}(f_{\epsilon_j}^-)^{n-1}\,dx
        \leq
        \int_{\Theta(u)\cap\{u\geq\eta\}}(f^-)^{n-1}\,dx.
        \]
        Letting $j\to\infty$ in \eqref{eps-est}, and then $\eta\downarrow0$, we use
        \[
        \{u\geq\eta\}\uparrow\{u>0\}
        \]
        to obtain the desired estimate by monotone convergence.
    \end{proof}

    \begin{proof}[Proof of Theorem~\ref{thm1}]
        If $\sup_\Omega u\leq0$, the assertion is immediate. Otherwise, apply the reduction in Remark~\ref{re1} and continue to denote the resulting global functions by $u$, $F$, and $f$. Lemma~\ref{lem5} gives
        \[
        \sup_\Omega u=\sup_{\R^n}u
        \leq
        \frac{1}{\lambda^{n-1}(n-1)^{n-1}\mathcal H^{n-1}(\partial B_1)}
        \int_{\Theta(u)\cap\{u>0\}}(f^-)^{n-1}\,dx.
        \]
        By Remark~\ref{re1}, $\Theta(u)\cap\Omega=\Theta(u^+,\Omega)$, while $\{u>0\}\subset\Omega$ and the extensions of $F$ and $f$ agree with the original functions on $\overline\Omega$. Therefore the last integral is bounded above by
        \[
        \int_{\Theta(u^+,\Omega)}(f^-)^{n-1}\,dx,
        \]
        where the right-hand side is understood in terms of the original $f$ on $\Omega$. This proves \eqref{ABP}.
    \end{proof}

\section*{Declaration of generative AI and AI-assisted technologies}
During the preparation of this manuscript, the author used ChatGPT (OpenAI) to assist with language editing, organization, and the checking of the exposition of mathematical arguments and references. The author independently verified and revised all mathematical arguments, references, and the final text, and takes full responsibility for the content of the manuscript.

\bibliographystyle{plain}
\bibliography{ABP_weighted_1Laplacian}

@article{alek1961,
author = {Aleksandrov, A. D.},
title = {Certain estimates for the {D}irichlet problem},
journal = {Dokl. Akad. Nauk SSSR},
volume = {134},
year = {1960},
pages = {1001--1004},
note = {English translation: Soviet Math. Dokl. 1 (1961), 1151--1154}
}

@article {alek1963,
    AUTHOR = {Aleksandrov, A. D.},
     TITLE = {Uniqueness conditions and bounds for the solution of the
              {D}irichlet problem},
   JOURNAL = {Vestnik Leningrad. Univ. Ser. Mat. Meh. Astronom.},
  FJOURNAL = {Vestnik Leningrad. Univ. Ser. Mat. Meh. Astronom.},
    VOLUME = {18},
      YEAR = {1963},
    NUMBER = {3},
     PAGES = {5--29},
   MRCLASS = {35.45 (53.75)},
  MRNUMBER = {164135},
MRREVIEWER = {H.\ Busemann},
}

@article {ACP,
    AUTHOR = {Argiolas, R. and Charro, F. and Peral, I.},
     TITLE = {On the {A}leksandrov-{B}akel'man-{P}ucci estimate for some
              elliptic and parabolic nonlinear operators},
   JOURNAL = {Arch. Ration. Mech. Anal.},
  FJOURNAL = {Archive for Rational Mechanics and Analysis},
    VOLUME = {202},
      YEAR = {2011},
    NUMBER = {3},
     PAGES = {875--917},
      ISSN = {0003-9527,1432-0673},
   MRCLASS = {35K55 (35B45 35K65)},
  MRNUMBER = {2854672},
       DOI = {10.1007/s00205-011-0434-y},
       URL = {https://doi.org/10.1007/s00205-011-0434-y},
}

@article{bak1961,
author = {Bakelman, I. Ja.},
title = {On the theory of quasilinear elliptic equations},
journal = {Sibirsk. Mat. Zh.},
volume = {2},
year = {1961},
pages = {179--186},
issn = {0037-4474},
mrclass = {35.47},
mrnumber = {126604},
mrreviewer = {H. Busemann}
}

@article {BGJ,
    AUTHOR = {Barron, E. N. and Goebel, R. and Jensen, R. R.},
     TITLE = {Quasiconvex functions and nonlinear {PDE}s},
   JOURNAL = {Trans. Amer. Math. Soc.},
  FJOURNAL = {Transactions of the American Mathematical Society},
    VOLUME = {365},
      YEAR = {2013},
    NUMBER = {8},
     PAGES = {4229--4255},
      ISSN = {0002-9947,1088-6850},
   MRCLASS = {35J60 (35B51 35D40 52A41 53A10)},
  MRNUMBER = {3055695},
MRREVIEWER = {Barbara\ Brandolini},
       DOI = {10.1090/S0002-9947-2013-05760-1},
       URL = {https://doi.org/10.1090/S0002-9947-2013-05760-1},
}

@book {CafCab,
    AUTHOR = {Caffarelli, L. A. and Cabr\'e, X.},
     TITLE = {Fully nonlinear elliptic equations},
    SERIES = {American Mathematical Society Colloquium Publications},
    VOLUME = {43},
 PUBLISHER = {American Mathematical Society, Providence, RI},
      YEAR = {1995},
     PAGES = {vi+104},
      ISBN = {0-8218-0437-5},
   MRCLASS = {35J60 (35-01 35B45 35B65 35Dxx)},
  MRNUMBER = {1351007},
MRREVIEWER = {P.\ Lindqvist},
       DOI = {10.1090/coll/043},
       URL = {https://doi.org/10.1090/coll/043},
}

@article {CCKS,
    AUTHOR = {Caffarelli, L. and Crandall, M. G. and Kocan, M. and Swi\c{e}ch, A.},
     TITLE = {On viscosity solutions of fully nonlinear equations with
              measurable ingredients},
   JOURNAL = {Comm. Pure Appl. Math.},
  FJOURNAL = {Communications on Pure and Applied Mathematics},
    VOLUME = {49},
      YEAR = {1996},
    NUMBER = {4},
     PAGES = {365--397},
      ISSN = {0010-3640,1097-0312},
   MRCLASS = {35J60 (35D05 35D10)},
  MRNUMBER = {1376656},
MRREVIEWER = {Katsuyuki\ Ishii},
       DOI = {10.1002/(sici)1097-0312(199604)49:4<365::aid-cpa3>3.0.co;2-a},
       URL =
              {https://doi.org/10.1002/(sici)1097-0312(199604)49:4<365::aid-cpa3>3.0.co;2-a},
}

@article {CDDM,
    AUTHOR = {Charro, F. and De Philippis, G. and Di Castro, A.
              and M\'aximo, D.},
     TITLE = {On the {A}leksandrov-{B}akelman-{P}ucci estimate for the
              infinity {L}aplacian},
   JOURNAL = {Calc. Var. Partial Differential Equations},
  FJOURNAL = {Calculus of Variations and Partial Differential Equations},
    VOLUME = {48},
      YEAR = {2013},
    NUMBER = {3-4},
     PAGES = {667--693},
      ISSN = {0944-2669,1432-0835},
   MRCLASS = {35J60 (35B45)},
  MRNUMBER = {3116027},
       DOI = {10.1007/s00526-012-0567-3},
       URL = {https://doi.org/10.1007/s00526-012-0567-3},
}

@article {CIL,
    AUTHOR = {Crandall, M. G. and Ishii, H. and Lions,
              P.-L.},
     TITLE = {User's guide to viscosity solutions of second order partial
              differential equations},
   JOURNAL = {Bull. Amer. Math. Soc. (N.S.)},
  FJOURNAL = {American Mathematical Society. Bulletin. New Series},
    VOLUME = {27},
      YEAR = {1992},
    NUMBER = {1},
     PAGES = {1--67},
      ISSN = {0273-0979,1088-9485},
   MRCLASS = {35J60 (35B05 35D05 35G20)},
  MRNUMBER = {1118699},
MRREVIEWER = {P.\ Szeptycki},
       DOI = {10.1090/S0273-0979-1992-00266-5},
       URL = {https://doi.org/10.1090/S0273-0979-1992-00266-5},
}

@article {DFQ09,
    AUTHOR = {D\'avila, G. and Felmer, P. and Quaas, A.},
     TITLE = {Alexandroff-{B}akelman-{P}ucci estimate for singular or
              degenerate fully nonlinear elliptic equations},
   JOURNAL = {C. R. Math. Acad. Sci. Paris},
  FJOURNAL = {Comptes Rendus Math\'ematique. Acad\'emie des Sciences. Paris},
    VOLUME = {347},
      YEAR = {2009},
    NUMBER = {19-20},
     PAGES = {1165--1168},
      ISSN = {1631-073X,1778-3569},
   MRCLASS = {35J60},
  MRNUMBER = {2566996},
       DOI = {10.1016/j.crma.2009.09.009},
       URL = {https://doi.org/10.1016/j.crma.2009.09.009},
}

@book {MR257325,
    AUTHOR = {Federer, H.},
     TITLE = {Geometric measure theory},
    SERIES = {Die Grundlehren der mathematischen Wissenschaften},
    VOLUME = {Band 153},
 PUBLISHER = {Springer-Verlag New York, Inc., New York},
      YEAR = {1969},
     PAGES = {xiv+676},
   MRCLASS = {28.80 (26.00)},
  MRNUMBER = {257325},
MRREVIEWER = {J.\ E.\ Brothers},
}

@article {I11,
    AUTHOR = {Imbert, C.},
     TITLE = {Alexandroff-{B}akelman-{P}ucci estimate and {H}arnack
              inequality for degenerate/singular fully non-linear elliptic
              equations},
   JOURNAL = {J. Differential Equations},
  FJOURNAL = {Journal of Differential Equations},
    VOLUME = {250},
      YEAR = {2011},
    NUMBER = {3},
     PAGES = {1553--1574},
      ISSN = {0022-0396,1090-2732},
   MRCLASS = {35J60 (35B45 35B65 35D40 35J70 35J75 49L25)},
  MRNUMBER = {2737217},
MRREVIEWER = {Fabiana\ Leoni},
       DOI = {10.1016/j.jde.2010.07.005},
       URL = {https://doi.org/10.1016/j.jde.2010.07.005},
}

@article {KS79,
    AUTHOR = {Krylov, N. V. and Safonov, M. V.},
     TITLE = {An estimate for the probability of a diffusion process hitting
              a set of positive measure},
   JOURNAL = {Dokl. Akad. Nauk SSSR},
  FJOURNAL = {Doklady Akademii Nauk SSSR},
    VOLUME = {245},
      YEAR = {1979},
    NUMBER = {1},
     PAGES = {18--20},
      ISSN = {0002-3264},
   MRCLASS = {60J60},
  MRNUMBER = {525227},
MRREVIEWER = {D.\ A.\ Darling},
}

@article {Saf80,
    AUTHOR = {Safonov, M. V.},
     TITLE = {The {H}arnack inequality for elliptic equations and {H}{\"o}lder
              continuity of their solutions},
   JOURNAL = {Zap. Nauchn. Sem. Leningrad. Otdel. Mat. Inst. Steklov. (LOMI)},
    VOLUME = {96},
      YEAR = {1980},
     PAGES = {272--287},
      NOTE = {English translation: J. Soviet Math. 21 (1983), no. 5, 851--863},
       DOI = {10.1007/BF01094448},
       URL = {https://doi.org/10.1007/BF01094448},
}

@article {LP20,
    AUTHOR = {Lewicka, M. and Peres, Y.},
     TITLE = {Which domains have two-sided supporting unit spheres at every
              boundary point?},
   JOURNAL = {Expo. Math.},
  FJOURNAL = {Expositiones Mathematicae},
    VOLUME = {38},
      YEAR = {2020},
    NUMBER = {4},
     PAGES = {548--558},
      ISSN = {0723-0869,1878-0792},
   MRCLASS = {51F30 (35F30 46B20)},
  MRNUMBER = {4177956},
MRREVIEWER = {Gabriel\ Pallier},
       DOI = {10.1016/j.exmath.2019.01.003},
       URL = {https://doi.org/10.1016/j.exmath.2019.01.003},
}

@article {puc1966,
    AUTHOR = {Pucci, C.},
     TITLE = {Limitazioni per soluzioni di equazioni ellittiche},
   JOURNAL = {Ann. Mat. Pura Appl. (4)},
  FJOURNAL = {Annali di Matematica Pura ed Applicata. Serie Quarta},
    VOLUME = {74},
      YEAR = {1966},
     PAGES = {15--30},
      ISSN = {0003-4622},
   MRCLASS = {35.19},
  MRNUMBER = {214905},
MRREVIEWER = {G.\ C.\ Barozzi},
       DOI = {10.1007/BF02416445},
       URL = {https://doi.org/10.1007/BF02416445},
}

@article {Miotto10,
    AUTHOR = {Junges Miotto, T.},
     TITLE = {The {A}leksandrov--{B}akelman--{P}ucci estimates for singular
              fully nonlinear operators},
   JOURNAL = {Commun. Contemp. Math.},
  FJOURNAL = {Communications in Contemporary Mathematics},
    VOLUME = {12},
      YEAR = {2010},
    NUMBER = {4},
     PAGES = {607--627},
       DOI = {10.1142/S0219199710003956},
       URL = {https://doi.org/10.1142/S0219199710003956},
}

@article {WW13,
    AUTHOR = {Wang, Tingting and Wang, Lizhou},
     TITLE = {A new {A}leksandrov--{B}akelman--{P}ucci maximum principle for
              {$p$}-{L}aplacian operator},
   JOURNAL = {Nonlinear Anal.},
  FJOURNAL = {Nonlinear Analysis: Theory, Methods \& Applications},
    VOLUME = {77},
      YEAR = {2013},
     PAGES = {171--179},
       DOI = {10.1016/j.na.2012.09.014},
       URL = {https://doi.org/10.1016/j.na.2012.09.014},
}

@article {KawSch07,
    AUTHOR = {Kawohl, Bernd and Schuricht, Friedemann},
     TITLE = {Dirichlet problems for the {$1$}-{L}aplace operator, including the eigenvalue problem},
   JOURNAL = {Commun. Contemp. Math.},
  FJOURNAL = {Communications in Contemporary Mathematics},
    VOLUME = {9},
      YEAR = {2007},
    NUMBER = {4},
     PAGES = {515--543},
       DOI = {10.1142/S0219199707002514},
       URL = {https://doi.org/10.1142/S0219199707002514},
}

@article {Anz83,
    AUTHOR = {Anzellotti, Gabriele},
     TITLE = {Pairings between measures and bounded functions and compensated compactness},
   JOURNAL = {Ann. Mat. Pura Appl. (4)},
  FJOURNAL = {Annali di Matematica Pura ed Applicata. Serie Quarta},
    VOLUME = {135},
      YEAR = {1983},
     PAGES = {293--318},
       DOI = {10.1007/BF01781073},
       URL = {https://doi.org/10.1007/BF01781073},
}

@book {ACM04,
    AUTHOR = {Andreu-Vaillo, Fuensanta and Caselles, Vicent and Maz\'on, Jos\'e M.},
     TITLE = {Parabolic quasilinear equations minimizing linear growth functionals},
    SERIES = {Progress in Mathematics},
    VOLUME = {223},
 PUBLISHER = {Birkh\"auser Verlag, Basel},
      YEAR = {2004},
     PAGES = {xiv+340},
       DOI = {10.1007/978-3-0348-7928-6},
       URL = {https://doi.org/10.1007/978-3-0348-7928-6},
}

@article {Dem04,
    AUTHOR = {Demengel, Fran\c{c}oise},
     TITLE = {Functions locally almost {$1$}-harmonic},
   JOURNAL = {Appl. Anal.},
  FJOURNAL = {Applicable Analysis},
    VOLUME = {83},
      YEAR = {2004},
    NUMBER = {9},
     PAGES = {865--896},
       DOI = {10.1080/00036810310001621369},
       URL = {https://doi.org/10.1080/00036810310001621369},
}

@article{ColSal03,
  author  = {Colesanti, Andrea and Salani, Paolo},
  title   = {Quasi-concave envelope of a function and convexity of level sets of solutions to elliptic equations},
  journal = {Math. Nachr.},
  volume  = {258},
  number  = {1},
  pages   = {3--15},
  year    = {2003},
  doi     = {10.1002/mana.200310083}
}

\end{document}